\tikzset{>=latex}
\tikzset{cross/.style={cross out, draw=black, fill=none, minimum size=2*(#1-\pgflinewidth), inner sep=0pt, outer sep=0pt}, cross/.default={2pt}}
\DeclareFontFamily{OT1}{pzc}{}
\DeclareFontShape{OT1}{pzc}{m}{it}{<-> s * [1.10] pzcmi7t}{}
\DeclareMathAlphabet{\mathpzc}{OT1}{pzc}{m}{it}
\DeclareFontFamily{U}{mathx}{\hyphenchar\font45}
\DeclareFontShape{U}{mathx}{m}{n}{<-> mathx10}{}
\DeclareSymbolFont{mathx}{U}{mathx}{m}{n}
\DeclareMathAccent{\widebar}{0}{mathx}{"73}
\newtheorem{prop}{Proposition}
\newtheorem{thm}{Theorem}
\theoremstyle{definition}
\newtheorem{defi}{Definition}
\newtheorem{expl}{Example}
\theoremstyle{remark}
\newcommand{\iso}{\xrightarrow{\raisebox{-0.7ex}[0ex][0ex]{$\sim$}}}
\begin{document}

\title{Counting (tropical) curves via scattering.sage}

% author information
\author{Tim Graefnitz} 
\address{\tiny Tim Gr\"afnitz, University of Cambridge, DPMMS, Wilberforce Road, Cambridge, CB3 0WB, UK}
%\curraddr{}
\email{tim.graefnitz@gmx.de}

\begin{abstract}
In this note I will explain how relative/log Gromov-Witten invariants of pairs $(X,D)$ with very ample smooth anticanonical divisor $D$ can be computed using algebro-combinatorial objects called scattering diagrams. The underlying principle behind this computational method is a tropical correspondence theorem for non-toric cases, which I will explain briefly. This is based on \cite{Gra}. By computing some examples I will give an introduction to a sage code that I wrote for computing scattering diagrams.
\end{abstract}

\maketitle
\thispagestyle{empty}

\iffalse
\setcounter{tocdepth}{1}

\renewcommand{\baselinestretch}{1.0}\normalsize
\tableofcontents
\renewcommand{\baselinestretch}{1.25}\normalsize
\fi

Let $X$ be a smooth projective surface with very ample anticanonical bundle $-K_X$, i.e., a del Pezzo surface of degree $K_X^2\geq 3$, and let $D\in -K_X$ be a smooth divisor. We call $(X,D)$ a \textit{very ample log Calabi-Yau pair}, since $K_X+D$ is trivial. Note that $X=\mathbb{P}^1\times\mathbb{P}^1$ (degree $8$) or $X$ is the blow up of $\mathbb{P}^2$ in $n\leq 6$ points (degree $9-n$).

We are interested in counting rational curves on $X$ that intersect $D$ in a single (unspecified) point with maximal tangency. Formally these can be defined as relative or, equivalently, logarithmic Gromov-Witten invariants $R_\beta(X,D)$ of $(X,D)$ of a given class $\beta$ and genus $0$ \cite{GSloggw}. The invariants $R_\beta(X,D)$ do not only count irreducible curves. There are contributions coming from reducible curves and multiple covers of curves of lower degree.

\begin{figure}[h!]
\centering
\begin{tikzpicture}[scale=1.9]
%\draw (-1,6) -- (-1,3.5) -- (1.5,3.5) -- (1.5,6) -- (-1,6);
\draw[smooth,samples=100,domain=-0.5:0.75] plot (\x,{sqrt((2*\x)^3+1)/2+4.75});
\draw[smooth,samples=100,domain=-0.5:0.75] plot (\x,{-sqrt((2*\x)^3+1)/2+4.75});
\draw[blue] (-0.75,5.25) -- (1,5.25);
\draw[black!40!green] (-0.68,4.75) ellipse (5pt and 10pt);
\coordinate (4) at (0,4.25);
\coordinate (5) at (0.5,4.5);
\end{tikzpicture}
\caption{A line (blue) and an irreducible conic (green) in $\mathbb{P}^2$ maximally tangent to an elliptic curve $E$ at a single point.}
\label{fig:classical}
\end{figure}
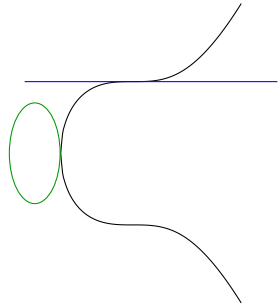

\begin{expl}
\label{expl:R}
The easiest example is $(\mathbb{P}^2,E)$, where $E\subset\mathbb{P}^2$ is an elliptic curve. Then $R_1(\mathbb{P}^2,E)=9$, since there are nine lines intersecting $E$ in exactly one point, one of the nine flex points of $E$. Further, $R_2(\mathbb{P}^2,E)=\frac{135}{4}=27+9\cdot\frac{3}{4}$, where $27$ is the number of irreducible conics maximally tangent to $E$, and each of the $9$ lines contributes $\frac{3}{4}$ by \cite{GPS}, Proposition 6.1. The next number is $R_3(\mathbb{P}^2,E)=244=234+9\cdot\frac{10}{9}$.
\end{expl}

\vspace{-3mm}
\rule{0.3\textwidth}{0.4pt} \\[0mm]
This work was financially supported by the ERC Advanced Grant MSAG.

\section{The scattering diagram for $(X,D)$}

\subsection{Toric models}

Let $(X,D)$ be a very ample log Calabi-Yau pair. Let $X^{tor}$ be a \textit{toric model} of $X$, i.e., a Gorenstein toric variety that admits a $\mathbb{Q}$-Gorenstein deformation to $X$. This means there exists a family $\mathcal{X}\rightarrow T$ with special fiber $X^{tor}$ and general fiber $X$ such that the total space $\mathcal{X}$ is $\mathbb{Q}$-Gorenstein. If $X$ is already toric we can take $X^{tor}=X$, but there might be other toric models. Let $\Sigma$ be the fan of $X^{tor}$ and let $\Delta$ be the \textit{spanning polytope} of $\Sigma$, i.e., the convex hull of the ray generators of $\Sigma$. Since $X^{tor}$ is Gorenstein and Fano this is a reflexive polytope. In dimension $2$ this is equivalent to the condition that $\Delta$ contains exactly one interior lattice point, and there are exactly $16$ such polytopes, see Figure \ref{fig:16}. The labeling in Figure \ref{fig:16} is such that the number gives the degree $K_X^2=K_{X^{tor}}^2$ and there is a letter iff $X\neq X^{tor}$. The cases (8') and (8'a) correspond to $\mathbb{P}^1\times\mathbb{P}^1$, while all other cases correspond to a blow up of $\mathbb{P}^2$ in $n\leq 6$ points.

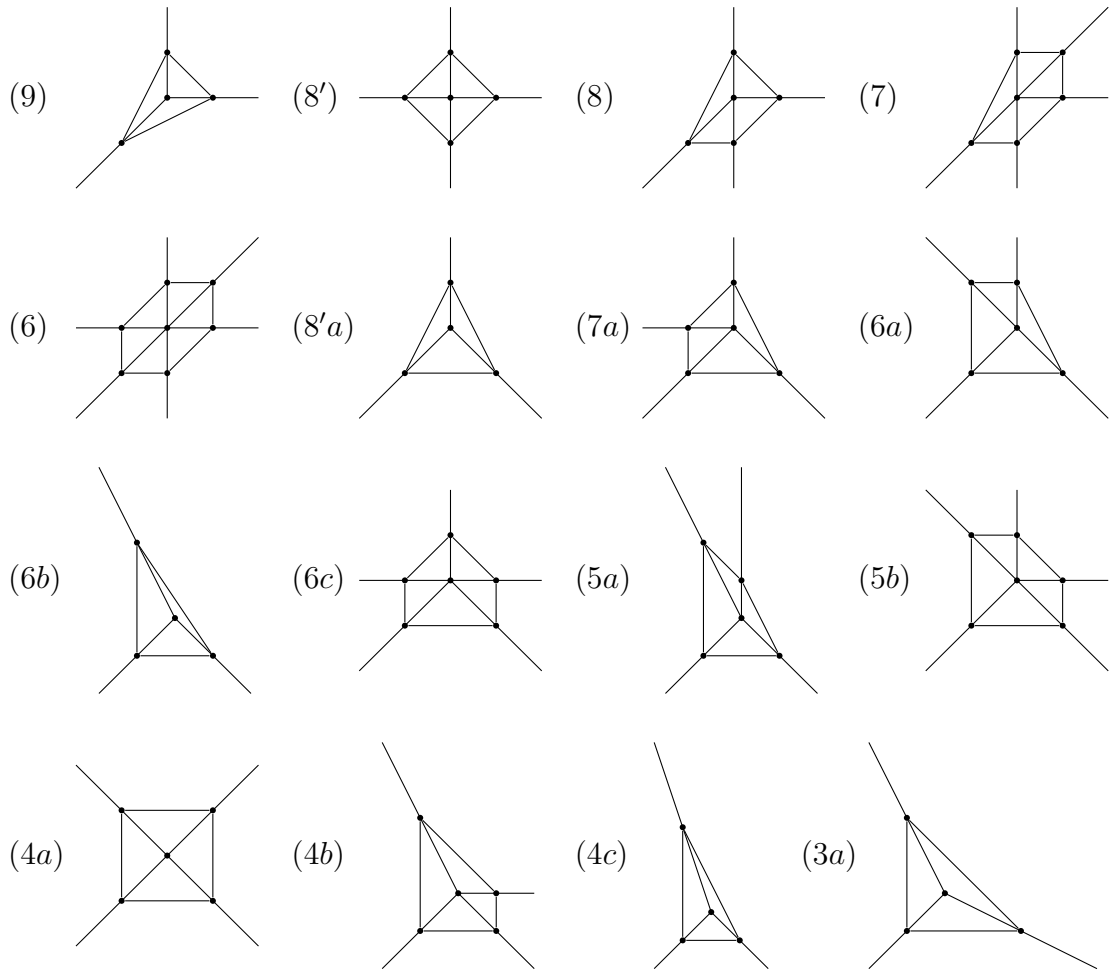
\begin{figure}[h!]
\centering
% (9)
\begin{minipage}[c]{0.05\textwidth}
$(9)$
\end{minipage}
\begin{minipage}[c]{0.18\textwidth}
\begin{tikzpicture}[scale=0.6]
\coordinate[fill,circle,inner sep=0.8pt] (0) at (0,0);
\coordinate[fill,circle,inner sep=0.8pt] (1) at (1,0);
\coordinate[fill,circle,inner sep=0.8pt] (2) at (0,1);
\coordinate[fill,circle,inner sep=0.8pt] (3) at (-1,-1);
\coordinate (1a) at (2,0);
\coordinate (2a) at (0,2);
\coordinate (3a) at (-2,-2);
\coordinate (12) at (0.5,0.5);
\coordinate (12a) at (0.5,2);
\coordinate (12b) at (2,0.5);
\coordinate (23) at (-0.5,0);
\coordinate (23a) at (-0.5,2);
\coordinate (23b) at (-2,-1.5);
\coordinate (31) at (0,-0.5);
\coordinate (31a) at (2,-0.5);
\coordinate (31b) at (-1.5,-2);
\draw (1) -- (2) -- (3) -- (1);
\draw (0,0) -- (1a);
\draw (0,0) -- (2a);
\draw (0,0) -- (3a);
\end{tikzpicture}
\end{minipage}
% (8')
\begin{minipage}[c]{0.05\textwidth}
$(8')$
\end{minipage}
\begin{minipage}[c]{0.18\textwidth}
\begin{tikzpicture}[scale=0.6]
\coordinate[fill,circle,inner sep=0.8pt] (0) at (0,0);
\coordinate[fill,circle,inner sep=0.8pt] (1) at (1,0);
\coordinate[fill,circle,inner sep=0.8pt] (2) at (0,1);
\coordinate[fill,circle,inner sep=0.8pt] (3) at (-1,0);
\coordinate[fill,circle,inner sep=0.8pt] (4) at (0,-1);
\coordinate (1a) at (2,0);
\coordinate (2a) at (0,2);
\coordinate (3a) at (-2,0);
\coordinate (4a) at (0,-2);
\coordinate (12) at (0.5,0.5);
\coordinate (12a) at (0.5,2);
\coordinate (12b) at (2,0.5);
\coordinate (23) at (-0.5,0.5);
\coordinate (23a) at (-0.5,2);
\coordinate (23b) at (-2,0.5);
\coordinate (34) at (-0.5,-0.5);
\coordinate (34a) at (-2,-0.5);
\coordinate (34b) at (-0.5,-2);
\coordinate (41) at (0.5,-0.5);
\coordinate (41a) at (2,-0.5);
\coordinate (41b) at (0.5,-2);
\draw (1) -- (2) -- (3) -- (4) -- (1);
\draw (0,0) -- (1a);
\draw (0,0) -- (2a);
\draw (0,0) -- (3a);
\draw (0,0) -- (4a);
\end{tikzpicture}
\end{minipage}
% (8)
\begin{minipage}[c]{0.05\textwidth}
$(8)$
\end{minipage}
\begin{minipage}[c]{0.18\textwidth}
\begin{tikzpicture}[scale=0.6]
\coordinate[fill,circle,inner sep=0.8pt] (0) at (0,0);
\coordinate[fill,circle,inner sep=0.8pt] (1) at (-1,-1);
\coordinate[fill,circle,inner sep=0.8pt] (2) at (0,-1);
\coordinate[fill,circle,inner sep=0.8pt] (3) at (1,0);
\coordinate[fill,circle,inner sep=0.8pt] (4) at (0,1);
\coordinate (1a) at (-2,-2);
\coordinate (2a) at (0,-2);
\coordinate (3a) at (2,0);
\coordinate (4a) at (0,2);
\coordinate (12) at (-0.5,-1);
\coordinate (12a) at (-0.5,-2);
\coordinate (12b) at (-1.5,-2);
\coordinate (23) at (0.5,-0.5);
\coordinate (23a) at (0.5,-2);
\coordinate (23b) at (2,-0.5);
\coordinate (34) at (0.5,0.5);
\coordinate (34a) at (2,0.5);
\coordinate (34b) at (0.5,2);
\coordinate (41) at (-0.5,-0);
\coordinate (41a) at (-2,-1.5);
\coordinate (41b) at (-0.5,2);
\draw (1) -- (2) -- (3) -- (4) -- (1);
\draw (0,0) -- (1a);
\draw (0,0) -- (2a);
\draw (0,0) -- (3a);
\draw (0,0) -- (4a);
\end{tikzpicture}
\end{minipage}
% (7)
\begin{minipage}[c]{0.05\textwidth}
$(7)$
\end{minipage}
\begin{minipage}[c]{0.18\textwidth}
\begin{tikzpicture}[scale=0.6]
\coordinate[fill,circle,inner sep=0.8pt] (0) at (0,0);
\coordinate[fill,circle,inner sep=0.8pt] (1) at (-1,-1);
\coordinate[fill,circle,inner sep=0.8pt] (2) at (0,-1);
\coordinate[fill,circle,inner sep=0.8pt] (3) at (1,0);
\coordinate[fill,circle,inner sep=0.8pt] (4) at (1,1);
\coordinate[fill,circle,inner sep=0.8pt] (5) at (0,1);
\coordinate (1a) at (-2,-2);
\coordinate (2a) at (0,-2);
\coordinate (3a) at (2,0);
\coordinate (4a) at (2,2);
\coordinate (5a) at (0,2);
\coordinate (12) at (-0.5,-1);
\coordinate (12a) at (-0.5,-2);
\coordinate (12b) at (-1.5,-2);
\coordinate (23) at (0.5,-0.5);
\coordinate (23a) at (0.5,-2);
\coordinate (23b) at (2,-0.5);
\coordinate (34) at (1,0.5);
\coordinate (34a) at (2,0.5);
\coordinate (34b) at (2,1.5);
\coordinate (45) at (0.5,1);
\coordinate (45a) at (1.5,2);
\coordinate (45b) at (0.5,2);
\coordinate (51) at (-0.5,0);
\coordinate (51a) at (-2,-1.5);
\coordinate (51b) at (-0.5,2);
\draw (1) -- (2) -- (3) -- (4) -- (5) -- (1);
\draw (0,0) -- (1a);
\draw (0,0) -- (2a);
\draw (0,0) -- (3a);
\draw (0,0) -- (4a);
\draw (0,0) -- (5a);
\end{tikzpicture}
\end{minipage}\\[6mm]
% (6)
\begin{minipage}[c]{0.05\textwidth}
$(6)$
\end{minipage}
\begin{minipage}[c]{0.18\textwidth}
\begin{tikzpicture}[scale=0.6]
\coordinate[fill,circle,inner sep=0.8pt] (0) at (0,0);
\coordinate[fill,circle,inner sep=0.8pt] (1) at (1,1);
\coordinate[fill,circle,inner sep=0.8pt] (2) at (0,1);
\coordinate[fill,circle,inner sep=0.8pt] (3) at (-1,0);
\coordinate[fill,circle,inner sep=0.8pt] (4) at (-1,-1);
\coordinate[fill,circle,inner sep=0.8pt] (5) at (0,-1);
\coordinate[fill,circle,inner sep=0.8pt] (6) at (1,0);
\coordinate (1a) at (2,2);
\coordinate (2a) at (0,2);
\coordinate (3a) at (-2,0);
\coordinate (4a) at (-2,-2);
\coordinate (5a) at (0,-2);
\coordinate (6a) at (2,0);
\coordinate (12) at (0.5,1);
\coordinate (12a) at (0.5,2);
\coordinate (12b) at (1.5,2);
\coordinate (23) at (-0.5,0.5);
\coordinate (23a) at (-0.5,2);
\coordinate (23b) at (-2,0.5);
\coordinate (34) at (-1,-0.5);
\coordinate (34a) at (-2,-0.5);
\coordinate (34b) at (-2,-1.5);
\coordinate (45) at (-0.5,-1);
\coordinate (45a) at (-1.5,-2);
\coordinate (45b) at (-0.5,-2);
\coordinate (56) at (0.5,-0.5);
\coordinate (56a) at (2,-0.5);
\coordinate (56b) at (0.5,-2);
\coordinate (61) at (1,0.5);
\coordinate (61a) at (2,0.5);
\coordinate (61b) at (2,1.5);
\draw (1) -- (2) -- (3) -- (4) -- (5) -- (6) -- (1);
\draw (0,0) -- (1a);
\draw (0,0) -- (2a);
\draw (0,0) -- (3a);
\draw (0,0) -- (4a);
\draw (0,0) -- (5a);
\draw (0,0) -- (6a);
\end{tikzpicture}
\end{minipage}
% (8a)
\begin{minipage}[c]{0.05\textwidth}
$(8'a)$
\end{minipage}
\begin{minipage}[c]{0.18\textwidth}
\begin{tikzpicture}[scale=0.6]
\coordinate[fill,circle,inner sep=0.8pt] (0) at (0,0);
\coordinate[fill,circle,inner sep=0.8pt] (1) at (-1,-1);
\coordinate[fill,circle,inner sep=0.8pt] (2) at (1,-1);
\coordinate[fill,circle,inner sep=0.8pt] (3) at (0,1);
\coordinate (1a) at (-2,-2);
\coordinate (2a) at (2,-2);
\coordinate (3a) at (0,2);
\coordinate (12) at (-0.5,-1);
\coordinate (12a) at (-1.5,-2);
\coordinate (12b) at (-0.5,-2);
\coordinate (12') at (0.5,-1);
\coordinate (12'a) at (0.5,-2);
\coordinate (12'b) at (1.5,-2);
\coordinate (23) at (0.5,0);
\coordinate (23a) at (0.5,2);
\coordinate (23b) at (2,-1.5);
\coordinate (31) at (-0.5,0);
\coordinate (31a) at (-0.5,2);
\coordinate (31b) at (-2,-1.5);
\draw (1) -- (2) -- (3) -- (1);
\draw (0,0) -- (1a);
\draw (0,0) -- (2a);
\draw (0,0) -- (3a);
\end{tikzpicture}
\end{minipage}
% (7a)
\begin{minipage}[c]{0.05\textwidth}
$(7a)$
\end{minipage}
\begin{minipage}[c]{0.18\textwidth}
\begin{tikzpicture}[scale=0.6]
\coordinate[fill,circle,inner sep=0.8pt] (0) at (0,0);
\coordinate[fill,circle,inner sep=0.8pt] (1) at (1,-1);
\coordinate[fill,circle,inner sep=0.8pt] (2) at (-1,-1);
\coordinate[fill,circle,inner sep=0.8pt] (3) at (-1,0);
\coordinate[fill,circle,inner sep=0.8pt] (4) at (0,1);
\coordinate (1a) at (2,-2);
\coordinate (2a) at (-2,-2);
\coordinate (3a) at (-2,0);
\coordinate (4a) at (0,2);
\coordinate (12) at (0.5,-1);
\coordinate (12a) at (1.5,-2);
\coordinate (12b) at (0.5,-2);
\coordinate (12') at (-0.5,-1);
\coordinate (12'a) at (-0.5,-2);
\coordinate (12'b) at (-1.5,-2);
\coordinate (23) at (-1,-0.5);
\coordinate (23a) at (-2,-1.5);
\coordinate (23b) at (-2,-0.5);
\coordinate (34) at (-0.5,0.5);
\coordinate (34a) at (-2,0.5);
\coordinate (34b) at (-0.5,2);
\coordinate (41) at (0.5,0);
\coordinate (41a) at (2,-1.5);
\coordinate (41b) at (0.5,2);
\draw (1) -- (2) -- (3) -- (4) -- (1);
\draw (0,0) -- (1a);
\draw (0,0) -- (2a);
\draw (0,0) -- (3a);
\draw (0,0) -- (4a);
\end{tikzpicture}
\end{minipage}
% (6a)
\begin{minipage}[c]{0.05\textwidth}
$(6a)$
\end{minipage}
\begin{minipage}[c]{0.18\textwidth}
\begin{tikzpicture}[scale=0.6]
\coordinate[fill,circle,inner sep=0.8pt] (0) at (0,0);
\coordinate[fill,circle,inner sep=0.8pt] (1) at (1,-1);
\coordinate[fill,circle,inner sep=0.8pt] (2) at (-1,-1);
\coordinate[fill,circle,inner sep=0.8pt] (3) at (-1,1);
\coordinate[fill,circle,inner sep=0.8pt] (4) at (0,1);
\coordinate (1a) at (2,-2);
\coordinate (2a) at (-2,-2);
\coordinate (3a) at (-2,2);
\coordinate (4a) at (0,2);
\coordinate (12a) at (1.5,-2);
\coordinate (12b) at (0.5,-2);
\coordinate (12') at (-0.5,-1);
\coordinate (12'a) at (-0.5,-2);
\coordinate (12'b) at (-1.5,-2);
\coordinate (23) at (-1,0.5);
\coordinate (23a) at (-2,1.5);
\coordinate (23b) at (-2,0.5);
\coordinate (23') at (-1,-0.5);
\coordinate (23'a) at (-2,-0.5);
\coordinate (23'b) at (-2,-1.5);
\coordinate (34) at (-0.5,1);
\coordinate (34a) at (-0.5,2);
\coordinate (34b) at (-1.5,2);
\coordinate (41) at (0.5,0);
\coordinate (41a) at (0.5,2);
\coordinate (41b) at (2,-1.5);
\draw (1) -- (2) -- (3) -- (4) -- (1);
\draw (0,0) -- (1a);
\draw (0,0) -- (2a);
\draw (0,0) -- (3a);
\draw (0,0) -- (4a);
\end{tikzpicture}
\end{minipage}\\[6mm]
% (6b)
\begin{minipage}[c]{0.07\textwidth}
$(6b)$
\end{minipage}
\begin{minipage}[c]{0.16\textwidth}
\begin{tikzpicture}[scale=0.5]
\coordinate[fill,circle,inner sep=0.8pt] (0) at (0,0);
\coordinate[fill,circle,inner sep=0.8pt] (1) at (-1,-1);
\coordinate[fill,circle,inner sep=0.8pt] (2) at (1,-1);
\coordinate[fill,circle,inner sep=0.8pt] (3) at (-1,2);
\coordinate (1a) at (-2,-2);
\coordinate (2a) at (2,-2);
\coordinate (3a) at (-2,4);
\coordinate (12) at (-0.5,-1);
\coordinate (12a) at (-1.5,-2);
\coordinate (12b) at (-0.5,-2);
\coordinate (12') at (0.5,-1);
\coordinate (12'a) at (0.5,-2);
\coordinate (12'b) at (1.5,-2);
\coordinate (23) at (0,0.5);
\coordinate (23a) at (2,-1.5);
\coordinate (23b) at (-1.75,4);
\coordinate (31) at (-1,-0.5);
\coordinate (31a) at (-2,-1.5);
\coordinate (31b) at (-2,-0.5);
\coordinate (31') at (-1,0.5);
\coordinate (31'a) at (-2,0.5);
\coordinate (31'b) at (-2,1.5);
\coordinate (31'') at (-1,1.5);
\coordinate (31''a) at (-2,2.5);
\coordinate (31''b) at (-2,3.5);
\draw (1) -- (2) -- (3) -- (1);
\draw (0,0) -- (1a);
\draw (0,0) -- (2a);
\draw (0,0) -- (3a);
\end{tikzpicture}
\end{minipage}
% (6c)
\begin{minipage}[c]{0.05\textwidth}
$(6c)$
\end{minipage}
\begin{minipage}[c]{0.18\textwidth}
\begin{tikzpicture}[scale=0.6]
\coordinate[fill,circle,inner sep=0.8pt] (0) at (0,0);
\coordinate[fill,circle,inner sep=0.8pt] (1) at (-1,-1);
\coordinate[fill,circle,inner sep=0.8pt] (2) at (1,-1);
\coordinate[fill,circle,inner sep=0.8pt] (3) at (1,0);
\coordinate[fill,circle,inner sep=0.8pt] (4) at (0,1);
\coordinate[fill,circle,inner sep=0.8pt] (5) at (-1,0);
\coordinate (1a) at (-2,-2);
\coordinate (2a) at (2,-2);
\coordinate (3a) at (2,0);
\coordinate (4a) at (0,2);
\coordinate (5a) at (-2,0);
\coordinate (12) at (-0.5,-1);
\coordinate (12a) at (-0.5,-2);
\coordinate (12b) at (-1.5,-2);
\coordinate (12') at (0.5,-1);
\coordinate (12'a) at (0.5,-2);
\coordinate (12'b) at (1.5,-2);
\coordinate (23) at (1,-0.5);
\coordinate (23a) at (2,-1.5);
\coordinate (23b) at (2,-0.5);
\coordinate (34) at (0.5,0.5);
\coordinate (34a) at (2,0.5);
\coordinate (34b) at (0.5,2);
\coordinate (45) at (-0.5,0.5);
\coordinate (45a) at (-0.5,2);
\coordinate (45b) at (-2,0.5);
\coordinate (51) at (-1,-0.5);
\coordinate (51a) at (-2,-1.5);
\coordinate (51b) at (-2,-0.5);
\draw (1) -- (2) -- (3) -- (4) -- (5) -- (1);
\draw (0,0) -- (1a);
\draw (0,0) -- (2a);
\draw (0,0) -- (3a);
\draw (0,0) -- (4a);
\draw (0,0) -- (5a);
\end{tikzpicture}
\end{minipage}
% (5a)
\begin{minipage}[c]{0.07\textwidth}
$(5a)$
\end{minipage}
\begin{minipage}[c]{0.16\textwidth}
\begin{tikzpicture}[scale=0.5]
\coordinate[fill,circle,inner sep=0.8pt] (0) at (0,0);
\coordinate[fill,circle,inner sep=0.8pt] (1) at (1,-1);
\coordinate[fill,circle,inner sep=0.8pt] (2) at (-1,-1);
\coordinate[fill,circle,inner sep=0.8pt] (3) at (-1,2);
\coordinate[fill,circle,inner sep=0.8pt] (4) at (0,1);
\coordinate (1a) at (2,-2);
\coordinate (2a) at (-2,-2);
\coordinate (3a) at (-2,4);
\coordinate (4a) at (0,4);
\coordinate (12) at (-0.5,-1);
\coordinate (12a) at (-1.5,-2);
\coordinate (12b) at (-0.5,-2);
\coordinate (12') at (0.5,-1);
\coordinate (12'a) at (0.5,-2);
\coordinate (12'b) at (1.5,-2);
\coordinate (23) at (-1,1.5);
\coordinate (23a) at (-2,3.5);
\coordinate (23b) at (-2,2.5);
\coordinate (23') at (-1,0.5);
\coordinate (23'a) at (-2,1.5);
\coordinate (23'b) at (-2,0.5);
\coordinate (23'') at (-1,-0.5);
\coordinate (23''a) at (-2,-0.5);
\coordinate (23''b) at (-2,-1.5);
\coordinate (34) at (-0.5,1.5);
\coordinate (34a) at (-0.5,4);
\coordinate (34b) at (-1.75,4);
\coordinate (41) at (0.5,0);
\coordinate (41a) at (0.5,4);
\coordinate (41b) at (2,-1.5);
\draw (1) -- (2) -- (3) -- (4) -- (1);
\draw (0,0) -- (1a);
\draw (0,0) -- (2a);
\draw (0,0) -- (3a);
\draw (0,0) -- (4a);
\end{tikzpicture}
\end{minipage}
% (5b)
\begin{minipage}[c]{0.05\textwidth}
$(5b)$
\end{minipage}
\begin{minipage}[c]{0.18\textwidth}
\begin{tikzpicture}[scale=0.6]
\coordinate[fill,circle,inner sep=0.8pt] (0) at (0,0);
\coordinate[fill,circle,inner sep=0.8pt] (1) at (-1,-1);
\coordinate[fill,circle,inner sep=0.8pt] (2) at (1,-1);
\coordinate[fill,circle,inner sep=0.8pt] (3) at (1,0);
\coordinate[fill,circle,inner sep=0.8pt] (4) at (0,1);
\coordinate[fill,circle,inner sep=0.8pt] (5) at (-1,1);
\coordinate (1a) at (-2,-2);
\coordinate (2a) at (2,-2);
\coordinate (3a) at (2,0);
\coordinate (4a) at (0,2);
\coordinate (5a) at (-2,2);
\coordinate (12) at (-0.5,-1);
\coordinate (12a) at (-0.5,-2);
\coordinate (12b) at (-1.5,-2);
\coordinate (12') at (0.5,-1);
\coordinate (12'a) at (0.5,-2);
\coordinate (12'b) at (1.5,-2);
\coordinate (23) at (1,-0.5);
\coordinate (23a) at (2,-1.5);
\coordinate (23b) at (2,-0.5);
\coordinate (34) at (0.5,0.5);
\coordinate (34a) at (2,0.5);
\coordinate (34b) at (0.5,2);
\coordinate (45) at (-0.5,1);
\coordinate (45a) at (-0.5,2);
\coordinate (45b) at (-1.5,2);
\coordinate (51) at (-1,0.5);
\coordinate (51a) at (-2,1.5);
\coordinate (51b) at (-2,0.5);
\coordinate (51') at (-1,-0.5);
\coordinate (51'a) at (-2,-1.5);
\coordinate (51'b) at (-2,-0.5);
\draw (1) -- (2) -- (3) -- (4) -- (5) -- (1);
\draw (0,0) -- (1a);
\draw (0,0) -- (2a);
\draw (0,0) -- (3a);
\draw (0,0) -- (4a);
\draw (0,0) -- (5a);
\end{tikzpicture}
\end{minipage}\\[6mm]
% (4a)
\begin{minipage}[c]{0.05\textwidth}
$(4a)$
\end{minipage}
\begin{minipage}[c]{0.18\textwidth}
\begin{tikzpicture}[scale=0.6]
\coordinate[fill,circle,inner sep=0.8pt] (0) at (0,0);
\coordinate[fill,circle,inner sep=0.8pt] (1) at (1,-1);
\coordinate[fill,circle,inner sep=0.8pt] (2) at (-1,-1);
\coordinate[fill,circle,inner sep=0.8pt] (3) at (-1,1);
\coordinate[fill,circle,inner sep=0.8pt] (4) at (1,1);
\coordinate (1a) at (2,-2);
\coordinate (2a) at (-2,-2);
\coordinate (3a) at (-2,2);
\coordinate (4a) at (2,2);
\coordinate (12) at (0.5,-1);
\coordinate (12a) at (1.5,-2);
\coordinate (12b) at (0.5,-2);
\coordinate (12') at (-0.5,-1);
\coordinate (12'a) at (-0.5,-2);
\coordinate (12'b) at (-1.5,-2);
\coordinate (23) at (-1,0.5);
\coordinate (23a) at (-2,1.5);
\coordinate (23b) at (-2,0.5);
\coordinate (23') at (-1,-0.5);
\coordinate (23'a) at (-2,-0.5);
\coordinate (23'b) at (-2,-1.5);
\coordinate (34) at (-0.5,1);
\coordinate (34a) at (-0.5,2);
\coordinate (34b) at (-1.5,2);
\coordinate (34') at (0.5,1);
\coordinate (34'a) at (0.5,2);
\coordinate (34'b) at (1.5,2);
\coordinate (41) at (1,0.5);
\coordinate (41a) at (2,0.5);
\coordinate (41b) at (2,1.5);
\coordinate (41') at (1,-0.5);
\coordinate (41'a) at (2,-0.5);
\coordinate (41'b) at (2,-1.5);
\draw (1) -- (2) -- (3) -- (4) -- (1);
\draw (0,0) -- (1a);
\draw (0,0) -- (2a);
\draw (0,0) -- (3a);
\draw (0,0) -- (4a);
\end{tikzpicture}
\end{minipage}
% (4b)
\begin{minipage}[c]{0.07\textwidth}
$(4b)$
\end{minipage}
\begin{minipage}[c]{0.16\textwidth}
\begin{tikzpicture}[scale=0.5]
\coordinate[fill,circle,inner sep=0.8pt] (0) at (0,0);
\coordinate[fill,circle,inner sep=0.8pt] (1) at (1,-1);
\coordinate[fill,circle,inner sep=0.8pt] (2) at (-1,-1);
\coordinate[fill,circle,inner sep=0.8pt] (3) at (-1,2);
\coordinate[fill,circle,inner sep=0.8pt] (4) at (1,0);
\coordinate (1a) at (2,-2);
\coordinate (2a) at (-2,-2);
\coordinate (3a) at (-2,4);
\coordinate (4a) at (2,0);
\coordinate (12) at (-0.5,-1);
\coordinate (12a) at (-1.5,-2);
\coordinate (12b) at (-0.5,-2);
\coordinate (12') at (0.5,-1);
\coordinate (12'a) at (0.5,-2);
\coordinate (12'b) at (1.5,-2);
\coordinate (23) at (-1,1.5);
\coordinate (23a) at (-2,3.5);
\coordinate (23b) at (-2,2.5);
\coordinate (23') at (-1,0.5);
\coordinate (23'a) at (-2,1.5);
\coordinate (23'b) at (-2,0.5);
\coordinate (23'') at (-1,-0.5);
\coordinate (23''a) at (-2,-0.5);
\coordinate (23''b) at (-2,-1.5);
\coordinate (34) at (-0.5,1.5);
\coordinate (34a) at (-0.5,4);
\coordinate (34b) at (-1.75,4);
\coordinate (34') at (0.5,0.5);
\coordinate (34'a) at (0.5,4);
\coordinate (34'b) at (2,0.5);
\coordinate (41) at (1,-0.5);
\coordinate (41a) at (2,-0.5);
\coordinate (41b) at (2,-1.5);
\draw (1) -- (2) -- (3) -- (4) -- (1);
\draw (0,0) -- (1a);
\draw (0,0) -- (2a);
\draw (0,0) -- (3a);
\draw (0,0) -- (4a);
\end{tikzpicture}
\end{minipage}
% (4c)
\begin{minipage}[c]{0.06\textwidth}
$(4c)$
\end{minipage}
\begin{minipage}[c]{0.12\textwidth}
\begin{tikzpicture}[scale=0.375]
\coordinate[fill,circle,inner sep=0.8pt] (0) at (0,0);
\coordinate[fill,circle,inner sep=0.8pt] (1) at (-1,-1);
\coordinate[fill,circle,inner sep=0.8pt] (2) at (1,-1);
\coordinate[fill,circle,inner sep=0.8pt] (3) at (-1,3);
\coordinate (1a) at (-2,-2);
\coordinate (2a) at (2,-2);
\coordinate (3a) at (-2,6);
\coordinate (12) at (-0.5,-1);
\coordinate (12a) at (-1.5,-2);
\coordinate (12b) at (-0.5,-2);
\coordinate (12') at (0.5,-1);
\coordinate (12'a) at (0.5,-2);
\coordinate (12'b) at (1.5,-2);
\coordinate (23) at (0.5,0);
\coordinate (23a) at (2,-1.5);
\coordinate (23b) at (0.5,6);
\coordinate (23') at (-0.5,2);
\coordinate (23'a) at (-1.833,6);
\coordinate (23'b) at (-0.5,6);
\coordinate (31) at (-1,-0.5);
\coordinate (31a) at (-2,-1.5);
\coordinate (31b) at (-2,-0.5);
\coordinate (31') at (-1,0.5);
\coordinate (31'a) at (-2,0.5);
\coordinate (31'b) at (-2,1.5);
\coordinate (31'') at (-1,1.5);
\coordinate (31''a) at (-2,2.5);
\coordinate (31''b) at (-2,3.5);
\coordinate (31''') at (-1,2.5);
\coordinate (31'''a) at (-2,4.5);
\coordinate (31'''b) at (-2,5.5);
\draw (1) -- (2) -- (3) -- (1);
\draw (0,0) -- (1a);
\draw (0,0) -- (2a);
\draw (0,0) -- (3a);
\end{tikzpicture}
\end{minipage}
% (3)
\begin{minipage}[c]{0.05\textwidth}
$(3a)$
\end{minipage}
\begin{minipage}[c]{0.23\textwidth}
\begin{tikzpicture}[scale=0.5]
\coordinate[fill,circle,inner sep=0.8pt] (0) at (0,0);
\coordinate[fill,circle,inner sep=0.8pt] (1) at (-1,-1);
\coordinate[fill,circle,inner sep=0.8pt] (2) at (2,-1);
\coordinate[fill,circle,inner sep=0.8pt] (3) at (-1,2);
\coordinate (1a) at (-2,-2);
\coordinate (2a) at (4,-2);
\coordinate (3a) at (-2,4);
\coordinate (12) at (-0.5,-1);
\coordinate (12a) at (-1.5,-2);
\coordinate (12b) at (-0.5,-2);
\coordinate (12') at (0.5,-1);
\coordinate (12'a) at (0.5,-2);
\coordinate (12'b) at (1.5,-2);
\coordinate (12'') at (1.5,-1);
\coordinate (12''a) at (2.5,-2);
\coordinate (12''b) at (3.5,-2);
\coordinate (23) at (-0.5,1.5);
\coordinate (23a) at (-0.5,4);
\coordinate (23b) at (-1.75,4);
\coordinate (23') at (0.5,0.5);
\coordinate (23'a) at (4,0.5);
\coordinate (23'b) at (0.5,4);
\coordinate (23'') at (1.5,-0.5);
\coordinate (23''a) at (4,-0.5);
\coordinate (23''b) at (4,-1.75);
\coordinate (31) at (-1,-0.5);
\coordinate (31a) at (-2,-1.5);
\coordinate (31b) at (-2,-0.5);
\coordinate (31') at (-1,0.5);
\coordinate (31'a) at (-2,0.5);
\coordinate (31'b) at (-2,1.5);
\coordinate (31'') at (-1,1.5);
\coordinate (31''a) at (-2,2.5);
\coordinate (31''b) at (-2,3.5);
\draw (1) -- (2) -- (3) -- (1);
\draw (0,0) -- (1a);
\draw (0,0) -- (2a);
\draw (0,0) -- (3a);
\end{tikzpicture}
\end{minipage}
\caption{Fans of Gorenstein toric Fano surfaces and the (reflexive) polytopes $\Delta(X^{tor})$ spanned by their rays.}
\label{fig:16}
\end{figure}

\subsection{Unfolding of the spanning polytope}
\label{S:unfold}

For a vertex $v$ of $\Delta$, the \textit{kink} of $\Delta$ at $v$ is defined as $k_v=|\braket{m_1^\bot,m_2}|=|\textup{det}(m_1|m_2)|$, where $m_1,m_2\in\mathbb{Z}^2$ are the primitive integral tangent vectors pointing from $v$ to its adjacent edges. Choose a vertex of $\Delta$ and fix an ordering (counterclockwise). This gives a numbering of the vertices $v=v_1,v_2,\ldots,v_r$. Let $k_i$ be the kink of $v_i$ and let $l_i$ be the affine length of the edge connecting $v_i$ with $v_{i+1\textup{ mod }r}$. Then $\vec{k}=(k_1,\ldots,k_r)$ and $\vec{l}=(l_1,\ldots,l_r)$ determine $\Delta$ up to $SL_2(\mathbb{Z})$-transformations, hence are uniquely determined by $X^{tor}$. 

\begin{defi}
Let $\Delta_\infty(X^{tor})\subset\mathbb{R}^2$ be the unique unbounded convex polytope with
\begin{compactenum}[(1)]
\item $\Delta_\infty(X^{tor})$ is contained in the lower half plane;
\item $\Delta_\infty(X^{tor})$ has the line segment $\textup{Conv}\{(0,0),(l_1,0)\}$ as an edge;
\item $\Delta_\infty(X^{tor})$ has vertices whose kinks are an infinite repetition of $k_1,\ldots,k_r$;
\item The kink at $(0,0)$ is $k_1$ and the kink at $(l_1,0)$ is $k_2$;
\item The edge connecting a vertex with kink $k_i$ and a vertex with kink $k_{i+1 \textup{ mod } r}$ has affine length $l_i$.
\end{compactenum}
We call $\Delta_\infty(X^{tor})$ the \textit{unfolding of the spanning polytope} of $X^{tor}$. A \textit{fundamental domain} for $\Delta_\infty(X^{tor})$ is a set $I \times \mathbb{R} \subset\mathbb{R}^2$ for $I$ a half-open interval of length $\sum_{i=1}^r l_i$.
\end{defi}

\begin{expl}
\label{expl:unfold}
See Figure \ref{fig:unfold} for part of $\Delta_\infty(X^{tor})$ of the following cases. The part of $\Delta_\infty(X^{tor})$ outside a chosen fundamental domain $I\times\mathbb{R}$ is shown dashed.
\begin{itemize}
\item[(9)] For $X=X^{tor}=\mathbb{P}^2$ we have $\vec{k}=(3,3,3)$, $\vec{l}=(1,1,1)$ and $I=[-1,2)$.
\item[(8)] For $X=X^{tor}=\mathbb{F}_1$ we have $\vec{k}=(1,2,3,2)$, $\vec{l}=(1,1,1,1)$ and $I=[-2,2)$.
\item[(3a)] For $X$ a smooth cubic surface and $X^{tor}=\mathbb{P}^2/\mathbb{Z}_3$ we have $\vec{k}=(1,1,1)$, $\vec{l}=(3,3,3)$ and $I=[-3,6)$.
\end{itemize}
\end{expl}

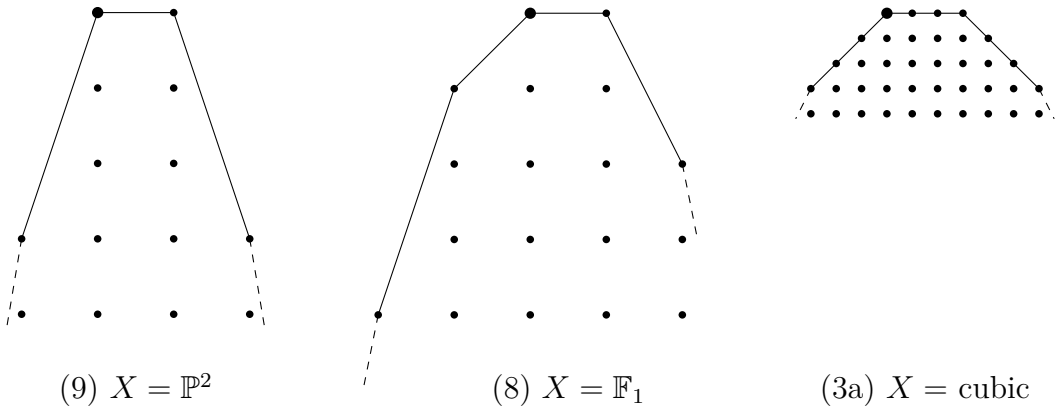
\begin{figure}[h!]
\centering
\begin{tikzpicture}
\draw (-1,-3) -- (0,0) -- (1,0) -- (2,-3);
\draw[dashed] (2,-3) -- (2.2,-4.2);
\draw[dashed] (-1,-3) -- (-1.2,-4.2);
\draw (0.5,-5) node{(9) $X=\mathbb{P}^2$};
\draw (0,0) node[fill,circle,inner sep=1.5pt]{};
\draw (0,-1) node[fill,circle,inner sep=1pt]{};
\draw (0,-2) node[fill,circle,inner sep=1pt]{};
\draw (0,-3) node[fill,circle,inner sep=1pt]{};
\draw (0,-4) node[fill,circle,inner sep=1pt]{};
\draw (1,0) node[fill,circle,inner sep=1pt]{};
\draw (1,-1) node[fill,circle,inner sep=1pt]{};
\draw (1,-2) node[fill,circle,inner sep=1pt]{};
\draw (1,-3) node[fill,circle,inner sep=1pt]{};
\draw (1,-4) node[fill,circle,inner sep=1pt]{};
\draw (-1,-3) node[fill,circle,inner sep=1pt]{};
\draw (-1,-4) node[fill,circle,inner sep=1pt]{};
\draw (2,-3) node[fill,circle,inner sep=1pt]{};
\draw (2,-4) node[fill,circle,inner sep=1pt]{};
\end{tikzpicture}
\hspace{1cm}
\begin{tikzpicture}
\draw (-2,-4) -- (-1,-1) -- (0,0) -- (1,0) -- (2,-2);
\draw[dashed] (2,-2) -- (2.2,-3);
\draw[dashed] (-2,-4) -- (-2.2,-5);
\draw (0.5,-5) node{(8) $X=\mathbb{F}_1$};
\draw (0,0) node[fill,circle,inner sep=1.5pt]{};
\draw (0,-1) node[fill,circle,inner sep=1pt]{};
\draw (0,-2) node[fill,circle,inner sep=1pt]{};
\draw (0,-3) node[fill,circle,inner sep=1pt]{};
\draw (0,-4) node[fill,circle,inner sep=1pt]{};
\draw (1,0) node[fill,circle,inner sep=1pt]{};
\draw (1,-1) node[fill,circle,inner sep=1pt]{};
\draw (1,-2) node[fill,circle,inner sep=1pt]{};
\draw (1,-3) node[fill,circle,inner sep=1pt]{};
\draw (1,-4) node[fill,circle,inner sep=1pt]{};
\draw (-1,-1) node[fill,circle,inner sep=1pt]{};
\draw (-1,-2) node[fill,circle,inner sep=1pt]{};
\draw (-1,-3) node[fill,circle,inner sep=1pt]{};
\draw (-1,-4) node[fill,circle,inner sep=1pt]{};
\draw (-2,-4) node[fill,circle,inner sep=1pt]{};
\draw (2,-2) node[fill,circle,inner sep=1pt]{};
\draw (2,-3) node[fill,circle,inner sep=1pt]{};
\draw (2,-4) node[fill,circle,inner sep=1pt]{};
\end{tikzpicture}
\hspace{1cm}
\begin{tikzpicture}
\draw (-1,-1) -- (0,0) -- (1,0) -- (2,-1);
\draw[dashed] (2,-1) -- (2.2,-1.4);
\draw[dashed] (-1,-1) -- (-1.2,-1.4);
\draw (0.5,-5) node{(3a) $X=$ cubic};
\draw (0,0) node[fill,circle,inner sep=1.5pt]{};
\draw (0,-1/3) node[fill,circle,inner sep=1pt]{};
\draw (0,-2/3) node[fill,circle,inner sep=1pt]{};
\draw (0,-3/3) node[fill,circle,inner sep=1pt]{};
\draw (0,-4/3) node[fill,circle,inner sep=1pt]{};
\draw (1/3,0) node[fill,circle,inner sep=1pt]{};
\draw (1/3,-1/3) node[fill,circle,inner sep=1pt]{};
\draw (1/3,-2/3) node[fill,circle,inner sep=1pt]{};
\draw (1/3,-3/3) node[fill,circle,inner sep=1pt]{};
\draw (1/3,-4/3) node[fill,circle,inner sep=1pt]{};
\draw (2/3,0) node[fill,circle,inner sep=1pt]{};
\draw (2/3,-1/3) node[fill,circle,inner sep=1pt]{};
\draw (2/3,-2/3) node[fill,circle,inner sep=1pt]{};
\draw (2/3,-3/3) node[fill,circle,inner sep=1pt]{};
\draw (2/3,-4/3) node[fill,circle,inner sep=1pt]{};
\draw (1,0) node[fill,circle,inner sep=1pt]{};
\draw (1,-1/3) node[fill,circle,inner sep=1pt]{};
\draw (1,-2/3) node[fill,circle,inner sep=1pt]{};
\draw (1,-3/3) node[fill,circle,inner sep=1pt]{};
\draw (1,-4/3) node[fill,circle,inner sep=1pt]{};
\draw (-1/3,-1/3) node[fill,circle,inner sep=1pt]{};
\draw (-1/3,-2/3) node[fill,circle,inner sep=1pt]{};
\draw (-1/3,-3/3) node[fill,circle,inner sep=1pt]{};
\draw (-1/3,-4/3) node[fill,circle,inner sep=1pt]{};
\draw (-2/3,-2/3) node[fill,circle,inner sep=1pt]{};
\draw (-2/3,-3/3) node[fill,circle,inner sep=1pt]{};
\draw (-2/3,-4/3) node[fill,circle,inner sep=1pt]{};
\draw (-1,-1) node[fill,circle,inner sep=1pt]{};
\draw (-1,-4/3) node[fill,circle,inner sep=1pt]{};
\draw (1+1/3,-1/3) node[fill,circle,inner sep=1pt]{};
\draw (1+1/3,-2/3) node[fill,circle,inner sep=1pt]{};
\draw (1+1/3,-3/3) node[fill,circle,inner sep=1pt]{};
\draw (1+1/3,-4/3) node[fill,circle,inner sep=1pt]{};
\draw (1+2/3,-2/3) node[fill,circle,inner sep=1pt]{};
\draw (1+2/3,-3/3) node[fill,circle,inner sep=1pt]{};
\draw (1+2/3,-4/3) node[fill,circle,inner sep=1pt]{};
\draw (1+1,-1) node[fill,circle,inner sep=1pt]{};
\draw (1+1,-4/3) node[fill,circle,inner sep=1pt]{};
\end{tikzpicture}
\caption{Unfolding of the spanning polytope $\Delta_\infty(X^{tor})$ for some $X$.}
\label{fig:unfold}
\end{figure}

\subsection{Initial scattering diagram}
\label{S:initial}

\begin{defi}
A \textit{scattering diagram} (or \textit{wall structure}) $\mathscr{S}$ is a collection of rays $\mathfrak{d}=b_{\mathfrak{d}}+\mathbb{R}_{\geq 0}m_{\mathfrak{d}}$, for $b_{\mathfrak{d}}\in\mathbb{R}^2$ and $m_{\mathfrak{d}}\in\mathbb{Z}^2$ primitive (i.e., $\textup{gcd}(\{m_i\})=1$), with attached functions $f_\mathfrak{d}\in\mathbb{C}\llbracket t\rrbracket [z^{\pm m_{\mathfrak{d}}}]$ such that $f_\mathfrak{d}$ has $t$-constant term $1$ and $f_\mathfrak{d}\neq 1$.
\end{defi}

Let $(X,D)$ be a very ample log Calabi-Yau pair and let $X^{tor}$ be a toric model of $X$. For an edge $e$ of $\Delta_\infty(X^{tor})$ let $l_e$ be its affine length and let $p_e$ be some interior point (e.g. the middle point). Let $v_e^+$ and $v_e^-$ be its adjacent vertices, and let $m_e^\pm\in\mathbb{Z}^2$ be the primitive integral vector pointing from $p_e$ in the direction of $v_e^\pm$. Let $\mathscr{S}_0(X^{tor})$ be the scattering diagram that has for each edge $e$ of $\Delta_\infty(X^{tor})$ two rays
\[ \mathfrak{d}_e^\pm = p_e + \mathbb{R}_{\geq 0}m_e^\pm, \quad f_{\mathfrak{d}_e^\pm}=(1+tz^{m_e^\pm})^{l_e}. \]
Note that $\mathscr{S}_0(X^{tor})$ contains finitely many rays with base in a given fundamental domain.

\begin{expl}
Figure \ref{fig:scat} shows $\mathscr{S}_0(X^{tor})$ for the cases from Example \ref{expl:unfold}. The points $p_e$ are indicated by crosses. Later they will correspond to affine singularities (see \S\ref{S:XD}). The parts of rays outside the fundamental domain chosen in Example \ref{expl:unfold} are shown dashed. The function attached to a ray $\mathfrak{d}$ with primitive direction $m\in\mathbb{Z}^2$ is $1+tz^m$ for $\mathbb{P}^2$ and $\mathbb{F}_1$ and $(1+tz^m)^3$ for the cubic surface.
\end{expl}

\begin{figure}[h!]
\centering
\begin{tikzpicture}
\draw (-1,-3) -- (0,0) -- (1,0) -- (2,-3);
\draw[dashed] (2,-3) -- (2.2,-4.2);
\draw[dashed] (-1,-3) -- (-1.2,-4.2);
\draw (0.5,-5) node{(9) $=\mathbb{P}^2$};
\draw (0.5,0) node[fill,cross,inner sep=2pt]{} -- (2,0);
\draw[->,dashed] (2,0) -- (2.2,0);
\draw (0.5,0) node[fill,cross,inner sep=2pt]{} -- (-1,0);
\draw[->,dashed] (-1,0) -- (-1.2,0);
\draw[->] (-0.5,-1.5) node[fill,cross,inner sep=2pt,rotate=71.565]{} -- (1.2,3.6);
\draw (-0.5,-1.5) node[fill,cross,inner sep=2pt,rotate=71.57]{} -- (-1,-3);
\draw[->,dashed] (-1,-3) -- (-1.2,-3.6);
\draw[->] (1.5,-1.5) node[fill,cross,inner sep=2pt,rotate=-71.565]{} -- (-0.2,3.6);
\draw (1.5,-1.5) node[fill,cross,inner sep=2pt,rotate=-71.57]{} -- (2,-3);
\draw[->,dashed] (2,-3) -- (2.2,-3.6);
\draw[->] (-0.5,-1.5) node[fill,cross,inner sep=2pt,rotate=71.565]{} -- (1.2,3.6);
\draw[->] (-1,-3) -- (0.1,3.6);
\draw[->] (2,-3) -- (0.9,3.6);
\draw[dashed] (-1.2,-1.8) -- (-1,0);
\draw[->] (-1,0) -- (-0.6,3.6);
\draw[dashed] (2.2,-1.8) -- (2,0);
\draw[->] (2,0) -- (1.6,3.6);
\end{tikzpicture}
\hspace{1cm} %%
\begin{tikzpicture}
\draw (-2,-4) -- (-1,-1) -- (0,0) -- (1,0) -- (2,-2);
\draw[dashed] (2,-2) -- (2.2,-3);
\draw[dashed] (-2,-4) -- (-2.2,-5);
\draw (0.5,-5) node{(8) $=\mathbb{F}_1$};
\draw (0.5,0) node[fill,cross,inner sep=2pt]{} -- (2,0);
\draw[->,dashed] (2,0) -- (2.2,0);
\draw (0.5,0) node[fill,cross,inner sep=2pt]{} -- (-2,0);
\draw[->,dashed] (-2,0) -- (-2.2,0);
\draw (-0.5,-0.5) node[fill,cross,inner sep=2pt,rotate=45]{} -- (2,2);
\draw[->,dashed] (2,2) -- (2.2,2.2);
\draw (-0.5,-0.5) node[fill,cross,inner sep=2pt,rotate=45]{} -- (-2,-2);
\draw[->,dashed] (-2,-2) -- (-2.2,-2.2);
\draw[->] (-1.5,-2.5) node[fill,cross,inner sep=2pt,rotate=71.565]{} -- (8/15,3.6);
\draw (-1.5,-2.5) node[fill,cross,inner sep=2pt,rotate=71.565]{} -- (-2,-4);
\draw[->,dashed] (-2,-4) -- (-2.2,-4.6);
\draw[->] (1.5,-1) node[fill,cross,inner sep=2pt,rotate=-63.435]{} -- (-0.8,3.6);
\draw (1.5,-1) node[fill,cross,inner sep=2pt,rotate=-63.435]{} -- (2,-2);
\draw[->,dashed] (2,-2) -- (2.2,-2.4);
\draw[->] (-2,-4) -- (-11/15,3.6);
\draw[->] (2,-2) -- (0.88,3.6);
\draw[dashed] (-2.2,-3.6) -- (-2,-2);
\draw[->] (-2,-2) -- (-1.3,3.6);
\draw[dashed] (2.2,-2.2) -- (2,-1);
\draw[->] (2,-1) -- (37/30,3.6);
\draw[dashed] (-2.2,-1.8) -- (-2,0);
\draw[->] (-2,0) -- (-1.6,3.6);
\draw[dashed] (2.2,1.4) -- (2,3);
\draw[->] (2,3) -- (1.925,3.6);
\end{tikzpicture}
\hspace{1cm} %%
\begin{tikzpicture}
\draw (-1,-1) -- (0,0) -- (1,0) -- (2,-1);
\draw[dashed] (2,-1) -- (2.2,-1.4);
\draw[dashed] (-1,-1) -- (-1.2,-1.4);
\draw (0.5,-5) node{(3a) $=$ cubic};
\draw (0.5,0) node[fill,cross,inner sep=2pt]{} -- (2,0);
\draw[->,dashed] (2,0) -- (2.2,0);
\draw (0.5,0) node[fill,cross,inner sep=2pt]{} -- (-1,0);
\draw[->,dashed] (-1,0) -- (-1.2,0);
\draw (-0.5,-0.5) node[fill,cross,inner sep=2pt,rotate=45]{} -- (2,2);
\draw[->,dashed] (2,2) -- (2.2,2.2);
\draw (-0.5,-0.5) node[fill,cross,inner sep=2pt,rotate=45]{} -- (-1,-1);
\draw[->,dashed] (-1,-1) -- (-1.2,-1.2);
\draw (1.5,-0.5) node[fill,cross,inner sep=2pt,rotate=45]{} -- (-1,2);
\draw[->,dashed] (-1,2) -- (-1.2,2.2);
\draw (1.5,-0.5) node[fill,cross,inner sep=2pt,rotate=45]{} -- (2,-1);
\draw[->,dashed] (2,-1) -- (2.2,-1.2);
\draw[->] (-1,-1) -- (1.3,3.6);
\draw[->] (2,-1) -- (-0.3,3.6);
\draw[dashed] (-1.2,-0.6) -- (-1,0);
\draw[->] (-1,0) -- (0.2,3.6);
\draw[dashed] (2.2,-0.6) -- (2,0);
\draw[->] (2,0) -- (0.8,3.6);
\draw[dashed] (-1.2,1.2) -- (-1,2);
\draw[->] (-1,2) -- (-0.6,3.6);
\draw[dashed] (2.2,1.2) -- (2,2);
\draw[->] (2,2) -- (1.6,3.6);
\end{tikzpicture}
\caption{The initial scattering diagram $\mathscr{S}_0(X^{tor})$.}
\label{fig:scat}
\end{figure}
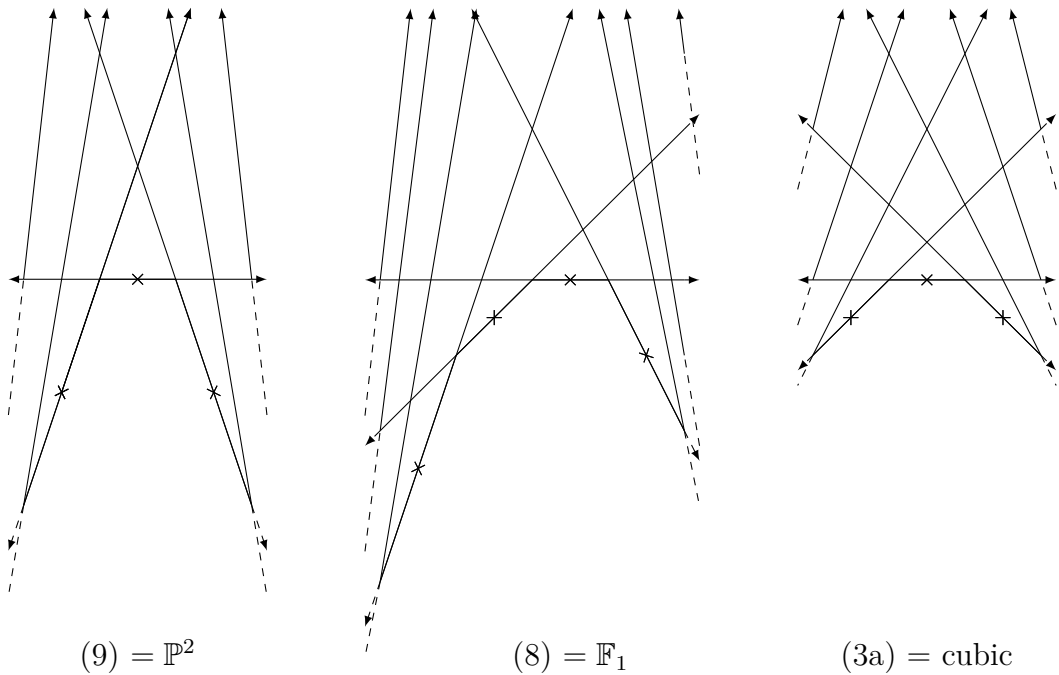

\subsection{Scattering}
\label{S:scat}

There is an algorithm called \textit{scattering} which produces from a scattering diagram $\mathscr{S}_0$ a bigger scattering diagram $\mathscr{S}_\infty$ containing $\mathscr{S}_0$. It was introduced by Kontsevich-Soibelman \cite{KS} and used by Gross-Siebert \cite{GSreconstruction} to construct degenerations of Calabi-Yau manifolds. The mirror to a log Calabi-Yau pair $(X,D)$ is a Landau-Ginzburg model $W : \check{X} \rightarrow \mathbb{C}$. The superpotential $W$ was constructed in \cite{CPS} using objects on scattering diagrams called \textit{broken lines}. This gives a precise description of the $B$-model for Calabi-Yau manifolds or Landau-Ginzburg models. On the $A$-model side scattering diagrams can be interpreted in terms of curve counts (log Gromov-Witten invariants) of Calabi-Yau manifolds or log Calabi-Yau pairs. This was established for toric boundary divisors in \cite{GPS}, for Looijenga pairs in \cite{BBG} and for smooth divisors in \cite{Gra}. This note deals with the last case.

Let $\mathscr{S}_0$ be a scattering diagram such that all rays have the same base $p\in\mathbb{R}^2$. Let $\gamma$ be a simple loop around $p$. This gives an ordering of the rays $\mathfrak{d}_1,\ldots,\mathfrak{d}_r$. For a ray $\mathfrak{d}$ with function $f_{\mathfrak{d}}$ define a $\mathbb{C}\llbracket t\rrbracket$-automorphism
\[ \theta_\mathfrak{d} : \mathbb{C}\llbracket t\rrbracket [x^{\pm 1},y^{\pm 1}], z^m \mapsto f_\mathfrak{d}^{\braket{n_{\mathfrak{d}},m}}z^m, \]
where $n_{\mathfrak{d}}$ is the unique primitive normal vector to $\mathfrak{d}$ that evaluates positively on $\gamma$. Write $\theta_{\mathscr{S},\gamma} = \theta_{\mathfrak{d}_r} \circ \ldots \circ \theta_{\mathfrak{d}_1}$. We call $\mathscr{S}$ \textit{consistent} to order $k$ if $\theta_{\mathscr{S},\gamma} \equiv 1 \textup{ mod } t^{k+1}$.

Note that $\mathscr{S}_0$ is trivially consistent to order $0$. From a scattering diagram $\mathscr{S}_k$ consistent to order $k$ we construct a scattering diagram $\mathscr{S}_{k+1}$ as follows. 

By \cite{KS} or \cite{GPS}, Theorem 1.4, we can write,
\[ \theta_{\mathscr{S}_k,\gamma}(z^m) = \left(1+\sum_{i=1}^s a_it^{k+1}z^{m_i}\right)^{-\braket{n_i,m}}z^m \textup{ mod } t^{k+2}. \]
Here $n_i$ is the unique primitive normal vector to $m_i$ that evaluates positively on $\gamma$. For each term $a_it^{k+1}z^{m_i}$ we add a ray $\mathfrak{d} = p+\mathbb{R}_{\geq 0}m_i$ with function $f_{\mathfrak{d}} = 1+a_it^{k+1}z^{m_i}$ to $\mathscr{S}_k$ to obtain a new scattering diagram $\mathscr{S}_{k+1}$. By construction $\mathscr{S}_{k+1}$ is consistent to order $k+1$. Let $\mathscr{S}_\infty$ be the limit $k\rightarrow\infty$. This may contain infinitely many rays.

Now let $\mathscr{S}_0$ be any scattering diagram. If two or more rays intersect in a point $p$, then $\mathscr{S}_0$ can be localized at $p$. This means we consider only rays through $p$ and split a ray into two rays (with the same function) if $p$ is not its base. In this way we get a diagram $\mathscr{S}_p$ all whose rays have base $p$. We perform the scattering procedure above locally at each intersection of two or more rays to obtain $\mathscr{S}_k$ and $\mathscr{S}_\infty$.

\subsection{The main theorem}
\label{S:scat}

Let $(X,D)$ be a very ample log Calabi-Yau pair and let $X^{tor}$ be a toric model of $X$. Let $\mathscr{S}_0(X^{tor})$ be the initial scattering diagram constructed in \S\ref{S:initial} and let $\mathscr{S}_\infty(X^{tor})$ be the consistent diagram obtained from the scattering procedure from \S\ref{S:scat}. Write
\[ f_{\textup{out}}=\prod_{\mathfrak{d} : m_\mathfrak{d}=(0,1)} f_{\mathfrak{d}}|_{t=1}, \]
where the sum is over all rays $\mathfrak{d}$ of $\mathscr{S}_\infty(X^{tor})$ going upwards and we set $t=1$.

\begin{thm}[\cite{Gra}, Theorem 2]
\label{thm:main}
We have
\[ \textup{log }f_{\textup{out}} = \sum_\beta (D.\beta) R_\beta(X,D) y^{D.\beta}, \]
where the sum is over all effective curve classes of $X$ and $D.\beta$ is the intersection multiplicity of a curve of class $\beta$ with $D$.
\end{thm}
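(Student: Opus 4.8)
The plan is to realise both sides of the identity as weighted counts of the same tropical curves in the unfolded polytope $\Delta_\infty(X^{tor})$, equipped with the integral affine structure whose singularities sit at the marked points $p_e$. On the left I would use the tropical interpretation of scattering in the spirit of \cite{GPS}, and on the right the tropical correspondence theorem for smooth anticanonical divisors. The theorem then emerges as the composition of these two correspondences, so the work splits into three tasks: compute $\log f_{\textup{out}}$ tropically, compute $R_\beta(X,D)$ tropically, and match the two.

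First, for the left-hand side, running the scattering algorithm above to all orders inserts, at every collision, rays whose functions are governed by tropical curves (\cite{KS}, \cite{GPS}). Each such tropical curve has bounded edges realising the successive collisions, unbounded ends in the initial wall directions $m_e^\pm$ with weights constrained by the exponents $l_e$, and a single unbounded end in the direction $(0,1)$ whose weight $w$ records the $y$-degree after setting $t=1$. Collecting the upward rays, one obtains
\[ \log f_{\textup{out}} = \sum_{w\geq 1} w\left(\sum_{\beta:\, D.\beta=w} N_\beta^{\textup{trop}}\right) y^w, \]
where $N_\beta^{\textup{trop}}$ is the multiplicity-weighted count of tropical curves whose ends are prescribed by $\beta$, and the factor $w$ is the standard weight that appears when passing from a wall function to its logarithm in the tropical vertex formalism.

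Second, for the right-hand side, the $\mathbb{Q}$-Gorenstein family $\mathcal{X}\rightarrow T$ degenerates $(X,D)$ to the toric model $X^{tor}$, whose dual intersection complex is precisely the unfolded polytope with affine singularities at the $p_e$. Deformation invariance of the log invariants lets me compute $R_\beta(X,D)$ on the central fibre, and the tropical correspondence theorem for smooth divisors, which is the main input from \cite{Gra}, identifies this with the same tropical count, $R_\beta(X,D)=N_\beta^{\textup{trop}}$. Indeed a genus-$0$ curve of class $\beta$ maximally tangent to $D$ tropicalises to a curve with one weight-$(D.\beta)$ end in direction $(0,1)$, exactly the ends described above. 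Substituting $N_\beta^{\textup{trop}}=R_\beta(X,D)$ into the previous display and comparing $y^w$-coefficients for each $w$ yields the stated formula.

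The principal difficulty is the right-hand correspondence, because $D$ is smooth rather than toric. Away from the points $p_e$ the tropicalisation is that of an ordinary toric degeneration and a Mikhalkin-type correspondence applies directly; but each $p_e$ is a focus-focus singularity of the integral affine structure, where monodromy forces tropical edges to refract and the naive balancing condition fails. I would need to show that the initial functions $(1+tz^{m_e^\pm})^{l_e}$ are exactly the local models absorbing this monodromy, and that specialising to the central fibre neither creates nor destroys curves contributing to $R_\beta$, so that the virtual contributions of the degenerate (typically non-reduced or multiply covered) curves reassemble into the tropical vertex multiplicities. This matching of obstruction-theoretic contributions with combinatorial multiplicities, already visible in the fractional multiple-cover contributions of \cite{GPS}, Proposition 6.1, is the technical heart and is where the full strength of \cite{Gra} is required.
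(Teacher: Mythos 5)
Your proposal follows essentially the same route as the paper: it factors Theorem \ref{thm:main} into the scattering--tropical identity of Proposition \ref{prop:scattrop} (via \cite{KS} and \cite{GPS}, Theorem 2.8) composed with the tropical correspondence Theorem \ref{thm:trop} of \cite{Gra}, which is exactly how the paper concludes. One small imprecision: the degeneration underlying Theorem \ref{thm:trop} is a toric degeneration of $(X,D)$ itself, handled via the degeneration formula of \cite{KLR} after blowing up the log singularities of the central fiber, rather than plain deformation invariance along the $\mathbb{Q}$-Gorenstein family from $X$ to $(X^{tor},\partial X^{tor})$ (which is not log smooth, and whose special fiber carries the wrong, toric, boundary) --- but since you defer precisely this point to the full strength of \cite{Gra}, the overall structure of your argument matches the paper's.
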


\begin{expl}
\label{expl:scat}
Figure \ref{fig:scatP2} shows the scattering diagram $\mathscr{S}_{(18)}(\mathbb{P}^2)$, rotated for reasons of space. The colors correspond to different $y$-orders of the rays. The sage code explained in \S\ref{S:sage} gives
\[ f_{\textup{out}} = (1+9y^3)^3 (1+72y^6)^3(1+36y^6)^3(1-78y^9)^3(1+81y^9)^3(1+243y^9)^6 + \mathcal{O}(y^{12}), \]
and thus
\[ \textup{log }f_{\textup{out}} = 3 \cdot 9 \cdot y^3 + 6\cdot \frac{135}{4} \cdot y^6 + 9 \cdot 244 \cdot y^9 + \mathcal{O}(y^{12}). \]
This gives the correct log Gromov-Witten invariants $R_d(\mathbb{P}^2,E)$ from Example \ref{expl:R}.
\end{expl}

\begin{figure}[h!]
\renewcommand\thefigure{1}
\centering
\begin{tikzpicture}[xscale=0.8,yscale=1,define rgb/.code={\definecolor{mycolor}{RGB}{#1}}, rgb color/.style={define rgb={#1},mycolor}]
\clip (-3,-1) rectangle (15,2);
\draw[->,rgb color={255,0,0}] (0.000,0.500) -- (0.000,6.50);
\draw[->,rgb color={255,0,0}] (0.000,0.500) -- (0.000,-5.50);
\draw[->,rgb color={255,0,0}] (-1.50,-0.500) -- (15.0,5.00);
\draw[->,rgb color={255,0,0}] (-1.50,1.50) -- (15.0,-4.00);
\draw[->,rgb color={255,0,0}] (-1.50,1.50) -- (-16.5,6.50);
\draw[->,rgb color={255,0,0}] (-1.50,-0.500) -- (-16.5,-5.50);
\draw[->,rgb color={255,0,0}] (-6.00,-1.50) -- (15.0,2.00);
\draw[->,rgb color={255,0,0}] (-6.00,2.50) -- (15.0,-1.00);
\draw[->,rgb color={255,0,0}] (-6.00,2.50) -- (-30.0,6.50);
\draw[->,rgb color={255,0,0}] (-6.00,-1.50) -- (-30.0,-5.50);
\draw[->,rgb color={255,0,0}] (-13.5,-2.50) -- (15.0,0.667);
\draw[->,rgb color={255,0,0}] (-13.5,3.50) -- (15.0,0.333);
\draw[->,rgb color={255,0,0}] (-13.5,3.50) -- (-40.5,6.50);
\draw[->,rgb color={255,0,0}] (-13.5,-2.50) -- (-40.5,-5.50);
\draw[->,rgb color={255,0,0}] (-24.0,-3.50) -- (15.0,-0.250);
\draw[->,rgb color={255,0,0}] (-24.0,4.50) -- (15.0,1.25);
\draw[->,rgb color={255,0,0}] (-24.0,4.50) -- (-48.0,6.50);
\draw[->,rgb color={255,0,0}] (-24.0,-3.50) -- (-48.0,-5.50);
\draw[->,rgb color={255,0,0}] (-37.5,-4.50) -- (15.0,-1.00);
\draw[->,rgb color={255,0,0}] (-37.5,5.50) -- (15.0,2.00);
\draw[->,rgb color={255,0,0}] (-37.5,5.50) -- (-52.5,6.50);
\draw[->,rgb color={255,0,0}] (-37.5,-4.50) -- (-52.5,-5.50);
\draw[->,rgb color={255,0,0}] (-54.0,-5.50) -- (15.0,-1.67);
\draw[->,rgb color={255,0,0}] (-54.0,6.50) -- (15.0,2.67);
\draw[->,red] (0.000,0.000) -- (15.0,0.000);
\draw[->,red] (-3.00,-1.00) -- (15.0,-1.00);
\draw[->,red] (0.000,1.00) -- (15.0,1.00);
\draw[->,red] (-3.00,2.00) -- (15.0,2.00);
\draw[->,red] (-45.0,-5.00) -- (15.0,-5.00);
\draw[->,red] (-30.0,-4.00) -- (15.0,-4.00);
\draw[->,red] (-18.0,-3.00) -- (15.0,-3.00);
\draw[->,red] (-9.00,-2.00) -- (15.0,-2.00);
\draw[->,red] (-9.00,3.00) -- (15.0,3.00);
\draw[->,red] (-18.0,4.00) -- (15.0,4.00);
\draw[->,red] (-30.0,5.00) -- (15.0,5.00);
\draw[->,red] (-45.0,6.00) -- (15.0,6.00);
\draw[->,black] (1.50,0.500) -- (15.0,0.500);
\draw[->,black] (-4.50,-1.50) -- (15.0,-1.50);
\draw[->,black] (0.000,-0.500) -- (15.0,-0.500);
\draw[->,black] (0.000,1.50) -- (15.0,1.50);
\draw[->,black] (-4.50,2.50) -- (15.0,2.50);
\draw[->,black] (-22.5,-3.50) -- (15.0,-3.50);
\draw[->,black] (-12.0,-2.50) -- (15.0,-2.50);
\draw[->,black] (-12.0,3.50) -- (15.0,3.50);
\draw[->,black] (-22.5,4.50) -- (15.0,4.50);
\draw[->,rgb color={255,169,0}] (0.000,0.000) -- (15.0,2.50);
\draw[->,rgb color={255,169,0}] (-18.0,-3.00) -- (15.0,-0.800);
\draw[->,rgb color={255,152,0}] (-9.00,-2.00) -- (15.0,0.000);
\draw[->,rgb color={255,132,0}] (-3.00,-1.00) -- (15.0,1.00);
\draw[->,rgb color={255,169,0}] (0.000,1.00) -- (15.0,6.00);
\draw[->,rgb color={255,132,0}] (-3.00,2.00) -- (-3.00,6.50);
\draw[->,rgb color={255,152,0}] (-9.00,3.00) -- (-19.5,6.50);
\draw[->,rgb color={255,169,0}] (-18.0,4.00) -- (-33.0,6.50);
\draw[->,rgb color={255,169,0}] (0.000,0.000) -- (15.0,-5.00);
\draw[->,rgb color={255,169,0}] (-18.0,-3.00) -- (-33.0,-5.50);
\draw[->,rgb color={255,152,0}] (-9.00,-2.00) -- (-19.5,-5.50);
\draw[->,rgb color={255,132,0}] (-3.00,-1.00) -- (-3.00,-5.50);
\draw[->,rgb color={255,169,0}] (0.000,1.00) -- (15.0,-1.50);
\draw[->,rgb color={255,132,0}] (-3.00,2.00) -- (15.0,0.000);
\draw[->,rgb color={255,152,0}] (-9.00,3.00) -- (15.0,1.00);
\draw[->,rgb color={255,169,0}] (-18.0,4.00) -- (15.0,1.80);
\draw[->,rgb color={255,152,0}] (3.00,0.000) -- (15.0,1.33);
\draw[->,rgb color={255,169,0}] (0.000,-1.00) -- (15.0,0.250);
\draw[->,rgb color={255,152,0}] (3.00,1.00) -- (15.0,3.00);
\draw[->,rgb color={255,169,0}] (0.000,2.00) -- (13.5,6.50);
\draw[->,rgb color={255,152,0}] (3.00,0.000) -- (15.0,-2.00);
\draw[->,rgb color={255,169,0}] (0.000,-1.00) -- (13.5,-5.50);
\draw[->,rgb color={255,152,0}] (3.00,1.00) -- (15.0,-0.333);
\draw[->,rgb color={255,169,0}] (0.000,2.00) -- (15.0,0.750);
\draw[->,rgb color={255,169,0}] (3.00,0.000) -- (15.0,-1.33);
\draw[->,rgb color={255,169,0}] (3.00,1.00) -- (15.0,0.000);
\draw[->,rgb color={255,152,0}] (1.50,0.500) -- (15.0,-1.00);
\draw[->,rgb color={255,169,0}] (0.000,-0.500) -- (15.0,-3.00);
\draw[->,rgb color={255,152,0}] (0.000,1.50) -- (15.0,0.250);
\draw[->,rgb color={255,152,0}] (1.50,0.500) -- (15.0,2.00);
\draw[->,rgb color={255,169,0}] (0.000,-0.500) -- (15.0,0.750);
\draw[->,rgb color={255,152,0}] (0.000,1.50) -- (15.0,4.00);
\draw[->,blue] (2.00,0.333) -- (15.0,0.333);
\draw[->,blue] (2.00,0.667) -- (15.0,0.667);
\draw[->,brown] (2.40,0.200) -- (15.0,0.200);
\draw[->,brown] (0.000,-0.800) -- (15.0,-0.800);
\draw[->,brown] (1.80,1.20) -- (15.0,1.20);
\draw[->,brown] (2.40,0.800) -- (15.0,0.800);
\draw[->,brown] (1.80,-0.200) -- (15.0,-0.200);
\draw[->,brown] (0.000,1.80) -- (15.0,1.80);
\draw[->,rgb color={255,169,0}] (3.00,0.000) -- (15.0,1.00);
\draw[->,rgb color={255,169,0}] (3.00,1.00) -- (15.0,2.33);
\draw[->,gray] (2.50,0.167) -- (15.0,0.167);
\draw[->,gray] (2.50,0.833) -- (15.0,0.833);
\draw[->,blue] (-13.0,-2.67) -- (15.0,-2.67);
\draw[->,blue] (-5.00,-1.67) -- (15.0,-1.67);
\draw[->,blue] (0.000,-0.667) -- (15.0,-0.667);
\draw[->,blue] (1.00,1.33) -- (15.0,1.33);
\draw[->,blue] (-3.00,2.33) -- (15.0,2.33);
\draw[->,blue] (-10.0,3.33) -- (15.0,3.33);
\draw[->,rgb color={255,169,0}] (2.00,0.333) -- (15.0,-0.750);
\draw[->,blue] (-10.0,-2.33) -- (15.0,-2.33);
\draw[->,blue] (-3.00,-1.33) -- (15.0,-1.33);
\draw[->,blue] (1.00,-0.333) -- (15.0,-0.333);
\draw[->,blue] (0.000,1.67) -- (15.0,1.67);
\draw[->,blue] (-5.00,2.67) -- (15.0,2.67);
\draw[->,blue] (-13.0,3.67) -- (15.0,3.67);
\draw[->,rgb color={255,169,0}] (2.00,0.667) -- (15.0,1.75);
\draw[->,rgb color={255,152,0}] (0.000,0.000) -- (15.0,3.33);
\draw[->,rgb color={255,169,0}] (-3.00,-1.00) -- (15.0,1.40);
\draw[->,rgb color={255,152,0}] (0.000,1.00) -- (8.25,6.50);
\draw[->,rgb color={255,169,0}] (-3.00,2.00) -- (-9.75,6.50);
\draw[->,rgb color={255,132,0}] (0.000,0.000) -- (15.0,1.67);
\draw[->,rgb color={255,169,0}] (-9.00,-2.00) -- (15.0,-0.400);
\draw[->,rgb color={255,152,0}] (-3.00,-1.00) -- (15.0,0.500);
\draw[->,rgb color={255,132,0}] (0.000,1.00) -- (15.0,3.50);
\draw[->,rgb color={255,152,0}] (-3.00,2.00) -- (10.5,6.50);
\draw[->,rgb color={255,169,0}] (-9.00,3.00) -- (-9.00,6.50);
\draw[->,rgb color={255,152,0}] (0.000,0.000) -- (8.25,-5.50);
\draw[->,rgb color={255,169,0}] (-3.00,-1.00) -- (-9.75,-5.50);
\draw[->,rgb color={255,152,0}] (0.000,1.00) -- (15.0,-2.33);
\draw[->,rgb color={255,169,0}] (-3.00,2.00) -- (15.0,-0.400);
\draw[->,rgb color={255,132,0}] (0.000,0.000) -- (15.0,-2.50);
\draw[->,rgb color={255,169,0}] (-9.00,-2.00) -- (-9.00,-5.50);
\draw[->,rgb color={255,152,0}] (-3.00,-1.00) -- (10.5,-5.50);
\draw[->,rgb color={255,132,0}] (0.000,1.00) -- (15.0,-0.667);
\draw[->,rgb color={255,152,0}] (-3.00,2.00) -- (15.0,0.500);
\draw[->,rgb color={255,169,0}] (-9.00,3.00) -- (15.0,1.40);
\draw[->,rgb color={255,169,0}] (1.80,0.400) -- (15.0,1.50);
\draw[->,rgb color={255,169,0}] (1.80,0.600) -- (15.0,-0.500);
\draw[->,green] (2.25,0.250) -- (15.0,0.250);
\draw[->,green] (-5.25,-1.75) -- (15.0,-1.75);
\draw[->,green] (0.000,-0.750) -- (15.0,-0.750);
\draw[->,green] (1.50,1.25) -- (15.0,1.25);
\draw[->,green] (-2.25,2.25) -- (15.0,2.25);
\draw[->,green] (2.25,0.750) -- (15.0,0.750);
\draw[->,green] (-2.25,-1.25) -- (15.0,-1.25);
\draw[->,green] (1.50,-0.250) -- (15.0,-0.250);
\draw[->,green] (0.000,1.75) -- (15.0,1.75);
\draw[->,green] (-5.25,2.75) -- (15.0,2.75);
\draw[->,brown] (1.80,0.400) -- (15.0,0.400);
\draw[->,brown] (0.000,-0.600) -- (15.0,-0.600);
\draw[->,brown] (0.600,1.40) -- (15.0,1.40);
\draw[->,brown] (1.80,0.600) -- (15.0,0.600);
\draw[->,brown] (0.600,-0.400) -- (15.0,-0.400);
\draw[->,brown] (0.000,1.60) -- (15.0,1.60);
\draw[->,rgb color={255,152,0}] (0.000,0.000) -- (15.0,1.25);
\draw[->,rgb color={255,169,0}] (-3.00,-1.00) -- (15.0,0.200);
\draw[->,rgb color={255,152,0}] (0.000,1.00) -- (15.0,2.67);
\draw[->,rgb color={255,169,0}] (-3.00,2.00) -- (15.0,5.00);
\draw[->,rgb color={255,152,0}] (0.000,0.000) -- (15.0,-1.67);
\draw[->,rgb color={255,169,0}] (-3.00,-1.00) -- (15.0,-4.00);
\draw[->,rgb color={255,152,0}] (0.000,1.00) -- (15.0,-0.250);
\draw[->,rgb color={255,169,0}] (-3.00,2.00) -- (15.0,0.800);
\draw[->,rgb color={255,169,0}] (0.000,0.000) -- (15.0,1.00);
\draw[->,rgb color={255,169,0}] (0.000,1.00) -- (15.0,2.25);
\draw[->,rgb color={255,169,0}] (0.000,0.000) -- (15.0,-1.25);
\draw[->,rgb color={255,169,0}] (0.000,1.00) -- (15.0,0.000);
\end{tikzpicture}
\caption{The scattering diagram $\mathscr{S}_\infty(\mathbb{P}^2,E)$ to $y$-order $6\cdot 3=18$.}
\label{fig:scatP2}
\end{figure}

Note that in Theorem \ref{thm:main} we consider the $y$-order of rays instead of the $t$-order. More correctly we would need to consider the scattering diagrams $\mathscr{S}_k(X^{tor})$ with respect to some piecewise linear function $\varphi : \mathbb{R}^2 \rightarrow \mathbb{R}$ and the $t$-order of a ray $\mathfrak{d}$ would be $\varphi(m_{\mathfrak{d}})$. Then the $t$-order would indeed agree with the $y$-order. But this would make the notation and calculation unnecessarily complicated, so we don't do this here and just consider the $y$-order.

Note that using Theorem \ref{thm:main} we only get $R_d(X,D):=\sum_{\beta : D.\beta=d} R_\beta(X,D)$. This is no problem for $(\mathbb{P}^2,E)$, since classes $\beta$ of $\mathbb{P}^2$ are determined by their degree and thus their intersection with $E$. But for other cases we lose some information. To keep track of $\beta$ we need to consider smooth toric models of $X$.

\subsection{Classes via smooth toric models}
\label{S:classes}

Let $(X,D)$ be a very ample log Calabi-Yau pair and let $X^{tor}$ be a toric model of $X$ with fan $\Sigma$. One can find a refinement $\widetilde{\Sigma}$ of $\Sigma$ such that the generators of each two neighboring rays form a basis of $\mathbb{Z}^2$ and such that the spanning polytope of $\widetilde{\Sigma}$ equals $\Delta$.. Let $\widetilde{X}^{tor}$ the toric variety defined by $\widetilde{\Sigma}$. Then $\widetilde{X}^{tor}$ is smooth and $-K_{\widetilde{X}^{tor}}$ is nef and big, but it is ample if and only if $\widetilde{X}^{tor}=X^{tor}$. In other words, $\widetilde{X}^{tor}$ is a smooth toric weak Fano surface.

Now there is an isomorphism $H_2^+(X,\mathbb{Z}) \simeq H_2^+(\widetilde{X}^{tor},\mathbb{Z})$ between the group of effective curve classes on $X$ and the group effective curve classes on $\widetilde{X}^{tor}$. There is an effective curve class of $\widetilde{X}^{tor}$ associated to any ray of $\widetilde{\Sigma}$, hence to any integral point on the boundary of $\Delta(\widetilde{X}^{tor})=\Delta(X^{tor})$ and of $\Delta_\infty(\widetilde{X}^{tor})=\Delta_\infty(X^{tor})$.

Subdivide the edges $e$ of $\Delta_\infty(\widetilde{X}^{tor})$ into line segments $e'$ of affine length $1$. Let $\mathscr{S}_0(\widetilde{X}^{tor})$ be the scattering diagram that has for each $e'$ two rays $\mathfrak{d}_{e'}^\pm = p_{e'} + \mathbb{R}_{\geq 0}m_{e'}^\pm$ with functions $f_{\mathfrak{d}_{e'}^\pm}=1+tz^{m_{e'}^\pm}$, for $p_{e'}$ some interior point and $m_{e'}^\pm\in\mathbb{Z}^2$ as in \S\ref{S:initial}. This is related to $\mathscr{S}_0(X^{tor})$ via deformation of scattering diagrams as in \cite{GPS}, {\S}1.4.

For a vertex $v$ of $\Delta_\infty(\widetilde{X}^{tor})$ with coordinates $(x,y)$ let $\beta_v$ be the associated effective curve class of $X$ and let $L_v$ be the vertical line with $x$-coordinate $x+\epsilon$, where $\epsilon\in\mathbb{R}$ is sufficiently small (we need $|\epsilon|<1/d$ if we want to count curves with $D.\beta\leq d$).

A ray $\mathfrak{d}$ of $\mathscr{S}_\infty(\widetilde{X}^{tor})$ can be completed to a tropical curve $h_{\mathfrak{d}}$ by adding edges for its ancestors (the rays that are needed to obtain $\mathfrak{d}$ from $\mathscr{S}_0(\widetilde{X}^{tor})$), see \S\ref{S:scattrop}.

Let $d_v$ be the intersection of $h_{\mathfrak{d}}$ with $L_v$, where points of intersection are counted with multiplicity $\textup{mult}_p(h_{\mathfrak{d}},L_v)=|\braket{m^\bot,m'}|=|\textup{det}(m|m')|$. Here $m$ and $m'$ are the primitive integral tangent vectors at $p$ of $h_{\mathfrak{d}}$ and $L_v$, respectively. Note that $m'=(0,1)$, so $\textup{mult}_p(h_{\mathfrak{d}},L_v)=|m_1|$. This is an example of the tropical intersection pairing of \cite{Rud}. Define $\beta_{\mathfrak{d}}=\sum_v d_v\beta_v$, where the sum is over vertices of $\Delta_\infty(X^{tor})$. 

For a primitive curve class $\beta$ define $f_\beta = \prod_{\mathfrak{d} : m_{\mathfrak{d}}=(0,1), \beta_{\mathfrak{d}}=k\beta} f_{\mathfrak{d}}|_{t=1}$. Then we have
\[ \textup{log }f_\beta = \sum_{k=1}^\infty k(D.\beta) R_{k\beta}(X,D) y^{k(D.\beta)}. \]

\begin{expl}
\label{expl:8'a2}
Consider case (8'a) from Figure \ref{fig:16}: $X=\mathbb{P}^1\times\mathbb{P}^1$ and $X^{tor}=\mathbb{P}(1,1,2)$. The refinement $\widetilde{\Sigma}$ adds one ray and we have $\widetilde{X}^{tor}=\mathbb{F}_2$, the second Hirzebruch surface. This is the $\mathbb{P}^1$-bundle over $\mathbb{P}^1$ associated to the sheaf $\mathcal{O}_{\mathbb{P}^1}\oplus\mathcal{O}_{\mathbb{P}^1}(-2)$. Note that $\mathbb{F}_2$ is smooth but not Fano. The group of effective curve classes of $\mathbb{F}_2$ is generated by the class of a fiber $F$ and the class of a section, e.g. the exceptional divisor $C$ of the blow up $\mathbb{F}_2\rightarrow\mathbb{P}(1,1,2)$. The intersection numbers are $F^2=0$, $C^2=-2$ and $C\cdot F=1$. The group of effective curve classes of $\mathbb{P}^1\times\mathbb{P}^1$ is generated by its two rulings $L_1$ and $L_2$, with $L_1^2=0$, $L_2^2=0$ and $L_1\cdot L_2=1$. There is an isomorphism
\[ H_2^+(\mathbb{F}_2,\mathbb{Z}) \iso H_2^+(\mathbb{P}^1\times\mathbb{P}^1,\mathbb{Z}), \ F \mapsto L_2, \ E \mapsto L_1-L_2. \]
\end{expl}

\begin{figure}[h!]
\centering
\begin{tikzpicture}[scale=0.9]
\coordinate[fill,circle,inner sep=0.8pt] (0) at (0,0);
\coordinate[fill,circle,inner sep=0.8pt] (1) at (-1,-1);
\coordinate[fill,circle,inner sep=0.8pt] (2) at (1,-1);
\coordinate[fill,circle,inner sep=0.8pt] (3) at (0,1);
\coordinate (1a) at (-2,-2);
\coordinate (2a) at (2,-2);
\coordinate (3a) at (0,2);
\coordinate (12) at (-0.5,-1);
\coordinate (12a) at (-1.5,-2);
\coordinate (12b) at (-0.5,-2);
\coordinate (12') at (0.5,-1);
\coordinate (12'a) at (0.5,-2);
\coordinate (12'b) at (1.5,-2);
\coordinate (23) at (0.5,0);
\coordinate (23a) at (0.5,2);
\coordinate (23b) at (2,-1.5);
\coordinate (31) at (-0.5,0);
\coordinate (31a) at (-0.5,2);
\coordinate (31b) at (-2,-1.5);
\draw (1) -- (2) -- (3) -- (1);
\draw (0,0) -- (1a) node[below]{\footnotesize$F\mapsto L_2$};
\draw (0,0) -- (2a) node[below]{\footnotesize$F\mapsto L_2$};
\draw (0,0) -- (3a) node[above]{\footnotesize$S=2F+E\mapsto L_1+L_2$};
\draw (0,0) -- (0,-2) node[below]{\footnotesize$E\mapsto L_1-L_2$};
\coordinate[fill,circle,inner sep=0.8pt] (b) at (0,-1);
\end{tikzpicture}
\hspace{1cm}
\begin{tikzpicture}[scale=1]
\draw (-1,-2) -- (0,0) -- (2,0) -- (3,-2);
\draw[dashed] (3,-2) -- (3.2,-3.2);
\draw[dashed] (-1,-2) -- (-1.2,-3.2);
\draw (-1,-2) -- (-1,1) node[above]{\footnotesize$L_1-L_2$};
\draw (0,0) -- (0,1) node[above]{\footnotesize$L_2$};
\draw (1,0) -- (1,1) node[above]{\footnotesize$L_1-L_2$};
\draw (2,0) -- (2,1) node[above]{\footnotesize$L_2$};
\draw[dashed] (3,-2) -- (3,1) node[above]{\footnotesize$L_1-L_2$};
\draw (0,0) node[fill,circle,inner sep=1.5pt]{};
\draw (0,-1) node[fill,circle,inner sep=1pt]{};
\draw (0,-2) node[fill,circle,inner sep=1pt]{};
\draw (0,-3) node[fill,circle,inner sep=1pt]{};
\draw (1,0) node[fill,circle,inner sep=1pt]{};
\draw (1,-1) node[fill,circle,inner sep=1pt]{};
\draw (1,-2) node[fill,circle,inner sep=1pt]{};
\draw (1,-3) node[fill,circle,inner sep=1pt]{};
\draw (2,0) node[fill,circle,inner sep=1pt]{};
\draw (2,-1) node[fill,circle,inner sep=1pt]{};
\draw (2,-2) node[fill,circle,inner sep=1pt]{};
\draw (2,-3) node[fill,circle,inner sep=1pt]{};
\draw (-1,-2) node[fill,circle,inner sep=1pt]{};
\draw (-1,-3) node[fill,circle,inner sep=1pt]{};
\draw (3,-2) node[fill,circle,inner sep=1pt]{};
\draw (3,-3) node[fill,circle,inner sep=1pt]{};
\draw (0.5,0) node[fill,cross,inner sep=2pt]{};
\draw (1.5,0) node[fill,cross,inner sep=2pt]{};
\draw (-0.5,-1) node[fill,cross,inner sep=2pt,rotate=63.435]{};
\draw (2.5,-1) node[fill,cross,inner sep=2pt,rotate=-63.435]{};
\end{tikzpicture}
\caption{Spanning polytope (left) and its unfolding (right) of the smooth toric model $\widetilde{X}^{tor}=\mathbb{F}_2$ of $X=\mathbb{P}^1\times\mathbb{P}^1$. The classes associated to rays of $\widetilde{\Sigma}$ resp. integral boundary points of $\Delta_\infty(\widetilde{X}^{tor})$ are shown.}
\label{fig:class}
\end{figure}
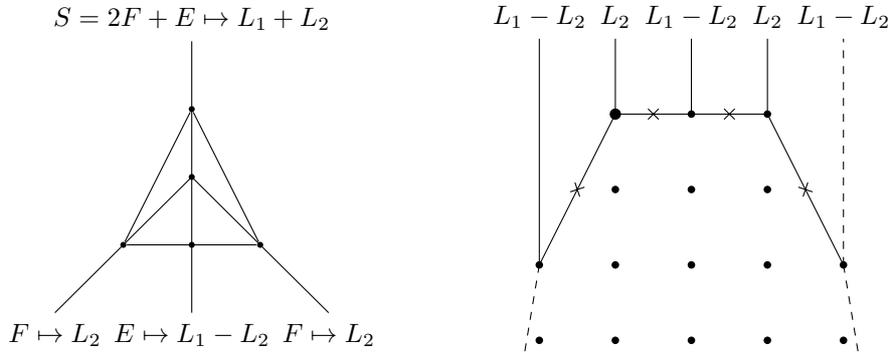

\section{Explanation via tropical curves}

The reason behind Theorem \ref{thm:main} is a general principle called \textit{tropical correspondence}, which relates counts of algebraic curves with counts of piecewise linear objects called \textit{tropical curves}. The rays of the scattering diagram $\mathscr{S}_\infty(X^{tor})$ correspond to tropical curves on a particular affine manifold with singularities $B$, the \textit{tropicalization} of $(X,D)$, and $\textup{log }f_{\textup{out}}$ is a generating function for counts of tropical curves. Together with tropical correspondence this gives a proof of Theorem \ref{thm:main}. In this section I will briefly sketch this correspondence. More details can be found in \cite{Gra}.

\subsection{Tropicalization of the toric model}
\label{S:toric}

Let $(X,D)$ be a very ample log Calabi-Yau pair and let $X^{tor}$ be a toric model of $X$. Let $\mathscr{P}$ be the polyhedral subdivision (subdivision into polytopes) of $\mathbb{R}^2$ whose $1$-dimensional cells are the edges of $\Delta(X^{tor})$ and $\rho\setminus\textup{Int }\Delta(X^{tor})$ for $\rho$ the rays of $\Sigma$. We call $(\mathbb{R}^2,\mathscr{P})$ the \textit{tropicalization} or \textit{dual intersection complex} of $(X^{tor},\partial X^{tor})$, where $\partial X^{tor}$ is the toric boundary. 

There is a degeneration $(\mathcal{X},\mathcal{D})$ of $(X^{tor},\partial X^{tor})$ into toric pieces (\textit{toric degeneration}) such that vertices $v$ of $\mathscr{P}$ correspond to the irreducible components $X_v$ of the central fiber of $\mathcal{X}$. The fan of $X_v$ is given by $\mathscr{P}$ locally around $v$. If we consider $\mathcal{X}$ with the divisorial log structure by $X_0\cup\mathcal{D}$, then $(B,\mathscr{P})$ is the tropicalization of the log scheme $\mathcal{X}$ in the sense of \cite{GSloggw}, Appendix B. This justifies the name.

\begin{expl}
The tropicalization of $(\mathbb{P}^2,\partial\mathbb{P}^2)$ is shown on the left hand side of Figure \ref{fig:dualint}. A toric degeneration of $(\mathbb{P}^2,\partial\mathbb{P}^2)$ is given by
\begin{eqnarray*}
\mathcal{X} &=& \{XYZ=tW\} \subset \mathbb{P}(1,1,1,3)\times\mathbb{A}^1 \rightarrow \mathbb{A}^1 \\
\mathcal{D} &=& \{W=0\} \subset \mathcal{X}
\end{eqnarray*}
where $X,Y,Z,W$ are the coordinates of $\mathbb{P}(1,1,1,3)$ and the map $\mathcal{X}\rightarrow\mathbb{A}^1$ is given by projection to $t$. The central fiber of $\mathcal{X}$ consists of three $\mathbb{P}(1,1,3)$ glued along toric divisors, according to the combinatorics of $\mathscr{P}$.
\end{expl}

\subsection{Tropicalization of $(X,D)$}
\label{S:XD}

Let $(X,D)$ be a very ample log Calabi-Yau pair and let $X^{tor}$ be a toric model of $X$. Let $(\mathbb{R}^2,\mathscr{P})$ be the tropicalization of $(X^{tor},\partial X^{tor})$. For each edge $e$ of $\mathscr{P}$ choose a point $p_e$ in the interior (e.g. the middle point) and cut out a wedge from $\mathbb{R}^2$ whose boundary consists of two rays with base $p_e$ and directions given by the directions of the unbounded edges of $\mathscr{P}$ adjacent to $e$. Identify the rays $\rho_1,\rho_2$ of the wedge via the unique $SL(2,\mathbb{Z})$-transformation that maps $\rho_1$ to $\rho_2$ and leaves $e$ invariant. Doing this for all edges $e$ we obtain an affine manifold with singularities $B$. This means away from some singular points $B$ is a topological manifold whose transition functions are affine transformations. The singularities are the points $p_e$ and the monodromy around $p_e$ is conjugate to $\left(\begin{smallmatrix}1 & l_e \\ 0 & 1 \end{smallmatrix}\right)$, where $l_e$ is the affine length of $e$. This is a consequence of $X$ being Fano. We call $(B,\mathscr{P})$ the \textit{tropicalization} of $(X,D)$. 

Again we have a toric degeneration $(\mathcal{X},\mathcal{D})$ of $(X,D)$ whose central fiber describes $(B,\mathscr{P})$ as in \S\ref{S:toric}. It can be obtained from the toric degeneration of $(X^{tor},\partial X^{tor})$ via deformation.

\begin{expl}
The tropicalization of $(\mathbb{P}^2,E)$ is shown on the right hand side of Figure \ref{fig:dualint}. The shaded regions are cut out and the dashed lines are mutually identified, leading to three affine singularities, pictured by crosses, with monodromy conjugate to $\left(\begin{smallmatrix}1 & 1 \\ 0 & 1 \end{smallmatrix}\right)$. These affine transformation have to be applied when passing across a shaded region. One of them is shown in Figure \ref{fig:dualint}. Note that the three unbounded edges are all parallel and $B$ has only one unbounded direction. 

A toric degeneration of $(\mathbb{P}^2,E)$ is given by
\begin{eqnarray*}
\mathcal{X} &=& \{XYZ=t(W+f_3)\} \subset \mathbb{P}(1,1,1,3)\times\mathbb{A}^1 \rightarrow \mathbb{A}^1 \\
\mathcal{D} &=& \{W=0\} \subset \mathcal{X}
\end{eqnarray*}
where $f_3$ is a general degree $3$ polynomial in $X,Y,Z$. Smoothness of $D$ corresponds to the fact that $B$ has only one unbounded direction.
\end{expl}

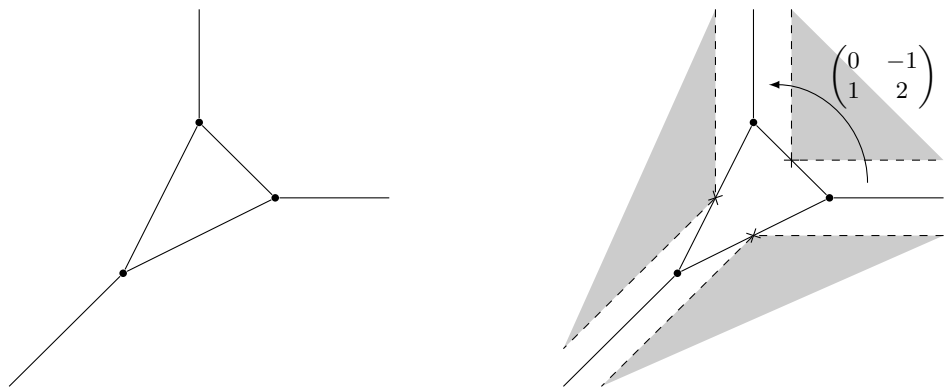
\begin{figure}[h!]
\centering
\begin{tikzpicture}
\coordinate[fill,circle,inner sep=1pt] (1) at (1,0);
\coordinate[fill,circle,inner sep=1pt] (2) at (0,1);
\coordinate[fill,circle,inner sep=1pt] (3) at (-1,-1);
\coordinate (1a) at (2.5,0);
\coordinate (2a) at (0,2.5);
\coordinate (3a) at (-2.5,-2.5);
\draw (1) -- (2) -- (3) -- (1);
\draw (1) -- (1a);
\draw (2) -- (2a);
\draw (3) -- (3a);
\end{tikzpicture}
\hspace{2cm}
\begin{tikzpicture}
\coordinate[fill,circle,inner sep=1pt] (1) at (1,0);
\coordinate[fill,circle,inner sep=1pt] (2) at (0,1);
\coordinate[fill,circle,inner sep=1pt] (3) at (-1,-1);
\coordinate (1a) at (2.5,0);
\coordinate (2a) at (0,2.5);
\coordinate (3a) at (-2.5,-2.5);
\coordinate (12) at (0.5,0.5);
\coordinate (12a) at (0.5,2.5);
\coordinate (12b) at (2.5,0.5);
\coordinate (23) at (-0.5,0);
\coordinate (23a) at (-0.5,2.5);
\coordinate (23b) at (-2.5,-2);
\coordinate (31) at (0,-0.5);
\coordinate (31a) at (2.5,-0.5);
\coordinate (31b) at (-2,-2.5);
\draw (1) -- (2) -- (3) -- (1);
\draw (1) -- (1a);
\draw (2) -- (2a);
\draw (3) -- (3a);
\fill[opacity=0.2] (12) -- (12a) -- (12b) -- (12);
\fill[opacity=0.2] (23) -- (23a) -- (23b) -- (23);
\fill[opacity=0.2] (31) -- (31a) -- (31b) -- (31);
\draw[dashed] (12a) -- (12) -- (12b);
\draw[dashed] (23a) -- (23) -- (23b);
\draw[dashed] (31a) -- (31) -- (31b);
\draw (12) node[fill,cross,inner sep=2pt,rotate=45]{};
\draw (23) node[fill,cross,inner sep=2pt,rotate=63.43]{};
\draw (31) node[fill,cross,inner sep=2pt,rotate=26.57]{};
% monodromy
\draw[->] (1.5,0.2) to[bend right=45] (0.2,1.5);
\draw (1.7,1.6) node{\scriptsize$\begin{pmatrix}0 & -1 \\ 1 & 2 \end{pmatrix}$};
\end{tikzpicture}
\caption{The tropicalization of $(\mathbb{P}^2,\partial\mathbb{P}^2)$ (left) and $(\mathbb{P}^2,E)$ (right).}
\label{fig:dualint}
\end{figure}

\subsection{Unfolding: chart at infinity and discrete covering space}
\label{S:unfold2}

The right hand side of Figure \ref{fig:dualint} shows the affine manifold with singularities $B$ in a chart on the bounded maximal cell. If we instead consider $B$ in a chart on an unbounded maximal cell, we get the picture on the left hand side of Figure \ref{fig:cover}. You can image going from the former to the latter by cutting out the shaded regions and mutually gluing the dashed lines. To still get an embedding into $\mathbb{R}^2$ we have to cut out some part of the bounded maximal cell, and we have to cut along some unbounded edge.

From Figure \ref{fig:cover} we see that a discrete covering space of $(B,\mathscr{P})$ contains the unfolding of the spanning polytope $\Delta_\infty(X^{tor})$ we defined in \S\ref{S:unfold}. There is one affine singularity on the interior of each edge e and it has monodromy $\left(\begin{smallmatrix}1 & l_e \\ 0 & 1 \end{smallmatrix}\right)$. By \cite{GSreconstruction} this gives an initial scattering diagram precisely as defined in \S\ref{S:initial}

\begin{figure}[h!]
\centering
\begin{tikzpicture}[scale=1.4,rotate=90]
\coordinate[label=below:\scriptsize${(1,0)}$,fill,circle,inner sep = 1pt] (1) at (0,0);
\coordinate[label=below:\scriptsize${(0,0)}$,fill,circle,inner sep = 1pt] (2) at (0,1);
\coordinate[label=below:\scriptsize${(2,-3)}$,fill,circle,inner sep = 1pt] (3) at (-3,-1);
\coordinate[label=below:\scriptsize${(-1,-3)}$,fill,circle,inner sep = 1pt] (4) at (-3,2);
\draw (3) -- (1) -- (2) -- (4);
\draw (1) -- (3.5,0);
\draw (2) -- (3.5,1);
\draw[dashed] (3) -- (3.5,-1);
\draw[dashed] (4) -- (3.5,2);
% cut
\draw[dashed] (-1,0) -- (0,0.5) -- (-1,1);
\draw[dashed] (-3,-0.8) -- (-1.5,-0.5) -- (-1,0);
\draw[dashed] (-3,1.8) -- (-1.5,1.5) -- (-1,1);
\fill[opacity=0.2] (-3,1.8) -- (-1.5,1.5) -- (-1,1) -- (0,0.5) -- (-1,0) -- (-1.5,-0.5) -- (-3,-0.8);
\coordinate[fill,cross,inner sep=2pt] (5) at (0,0.5);
\coordinate[fill,cross,inner sep=2pt,rotate=26.57] (6) at (-1.5,-0.5);
\coordinate[fill,cross,inner sep=2pt,rotate=-30] (7) at (-1.5,1.5);
% red
\draw[red] (0,2) -- (3,1.667);
\draw[red] (0,-1) -- (0,-0.667);
\end{tikzpicture}
\hspace{2cm}
\begin{tikzpicture}[scale=0.72,rotate=90]
\coordinate[fill,circle,inner sep = 1pt] (1) at (0,0);
\coordinate[fill,circle,inner sep = 1pt] (2) at (0,1);
\coordinate[fill,circle,inner sep = 1pt] (3) at (-3,-1);
\coordinate[fill,circle,inner sep = 1pt] (4) at (-3,2);
\draw (3) -- (1) -- (2) -- (4);
\draw (1) -- (3.5,0);
\draw (2) -- (3.5,1);
\draw (3) -- (3.5,-1);
\draw (4) -- (3.5,2);
\draw (3) -- (-9,-2) -- (-9.9,-2.1);
\draw (4) -- (-9,3) -- (-9.9,3.1);
\draw (-9,-2) -- (3.5,-2);
\draw (-9,3) -- (3.5,3);
% cut
\draw[dashed] (-9.9,-1.9875) -- (-6,-1.5) -- (-4,-1) -- (-1.5,-0.5) -- (-1,0) -- (0,0.5) -- (-1,1) -- (-1.5,1.5) -- (-4,2) -- (-6,2.5) -- (-9.9,2.9875);
\fill[opacity=0.2] (-9.9,-1.9875) -- (-6,-1.5) -- (-4,-1) -- (-1.5,-0.5) -- (-1,0) -- (0,0.5) -- (-1,1) -- (-1.5,1.5) -- (-4,2) -- (-6,2.5) -- (-9.9,2.9875);
\coordinate[fill,cross,inner sep=2pt] (5) at (0,0.5);
\coordinate[fill,cross,inner sep=2pt,rotate=26.57] (6) at (-1.5,-0.5);
\coordinate[fill,cross,inner sep=2pt,rotate=-30] (7) at (-1.5,1.5);
\coordinate[fill,cross,inner sep=2pt,rotate=15] (8) at (-6,-1.5);
\coordinate[fill,cross,inner sep=2pt,rotate=-15] (9) at (-6,2.5);
% red
\draw[red] (3,1.667) -- (-3,2.333);
\draw[red] (0,-0.667) -- (0,-1.333);
% shift
\draw (-10.2,0);
\end{tikzpicture}
\caption{The tropicalization $(B,\mathscr{P})$ of $(\mathbb{P}^2,E)$ in a chart on an unbounded cell (left) and a discrete covering space $\bar{B}\rightarrow B$ of it (right). A straight line on $B$ and its preimage in $\bar{B}$ are shown in red.}
\label{fig:cover}
\end{figure}
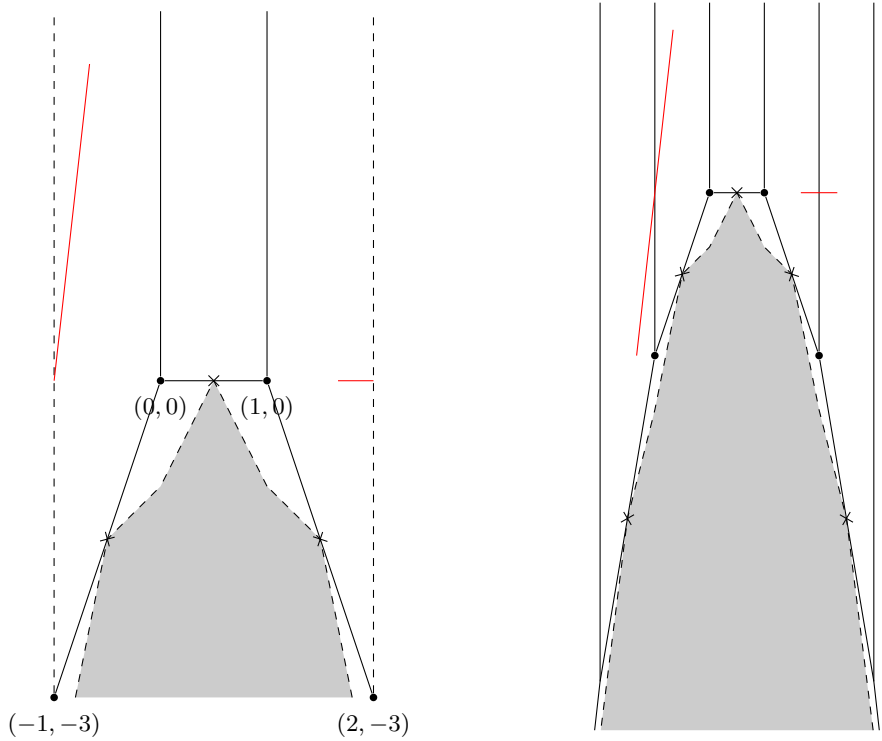

\subsection{Tropical curves}

Let $B$ be an affine manifold with singularities.

\begin{defi}
A tropical curve $h : \Gamma \rightarrow B$ is a piecewise affine map from a weighted graph $\Gamma$ without $1$- and $2$-valent vertices, possibly with some non-compact edges (legs), such that (images of) legs are either unbounded or end in affine singularities of $B$, and such that at each vertex $V$ of $\Gamma$ the balancing condition holds:
\[ \sum_{E\ni V} w_Em_{(V,E)}. \]
Here $w_E$ is the weight of $E$ and $m_{(V,E)}$ is the primitive integral tangent vector pointing from $V$ to $E$.
\end{defi}

Note that locally near an affine singularity $\delta$ of $B$ the edges and legs of a tropical curve $h:\Gamma\rightarrow B$ must have images whose directions are invariant under the monodromy transformation around $\delta$. When $(B,\mathscr{P})$ is a tropicalization of a pair $(X,D)$ as in \S\ref{S:XD} this means that they are parallel to the edge of $\mathscr{P}$ containing $\delta$.

\begin{defi}
\label{defi:mult}
Let $h : \Gamma \rightarrow B$ be a tropical curve. For a trivalent vertex $V\in V(\Gamma)$ define
\[ m_V=\lvert u_{(V,E_1)}\wedge u_{(V,E_2)}\rvert=\lvert\textup{det}(u_{(V,E_1)}|u_{(V,E_2)})\rvert, \]
where $E_1,E_2$ are any two edges adjacent to $V$. For a vertex $V\in V(\Gamma)$ of valency $\nu_V>3$ let $h_V$ be the one-vertex tropical curve describing $h$ locally at $V$ and let $h'_V$ be a deformation of $h_V$ to a trivalent tropical curve. 
This deformation has $\nu_V-2$ vertices. We define 
\[ m_V=\prod_{V'}m_{V'} \]
where the product is over all vertices $V'$ of $h'_V$. By Proposition 2.7 in \cite{GPS}, this expression is independent of the deformation $h'_V$, hence well-defined. For a bounded leg $L$ with weight $w_L$ define 
\[ m_L=\frac{(-1)^{w_L+1}}{w_L^2}. \]
We define the \textit{multiplicity} of $h$ to be
\[ m_h = \frac{1}{|\textup{Aut}(h)|} \cdot \prod_V m_V \cdot \prod_L m_E, \]
where the first product is over all vertices of $\Gamma$ and the second product is over all legs $L$ of $\Gamma$ ending in affine singularities of $B$.
\end{defi}

\subsection{Tropical correspondence}

The tropical correspondence theorem for very ample log Calabi-Yau pairs $(X,D)$ is the following.

\begin{thm}[\cite{Gra}, Theorem 1]
\label{thm:trop}
\[ R_\beta(X,D) = \sum_{h \in \mathcal{H}_\beta(X,D)} \textup{Mult}(h). \]
\end{thm}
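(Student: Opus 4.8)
The plan is to prove Theorem \ref{thm:trop} by degenerating $(X,D)$ and applying a logarithmic degeneration formula, then matching the resulting strata with the tropical curves in $\mathcal{H}_\beta(X,D)$. First I would use the toric degeneration $(\mathcal{X},\mathcal{D})\to\mathbb{A}^1$ of $(X,D)$ from \S\ref{S:XD}, whose central fiber $(X_0,\mathcal{D}_0)$ realizes the tropicalization $(B,\mathscr{P})$. Since logarithmic Gromov-Witten invariants are invariant in log smooth families, $R_\beta(X,D)$ equals the corresponding log invariant of the central fiber; the genus-0 condition and the maximal-tangency condition along $D$ are preserved, so it suffices to compute the invariant on $X_0$, where the geometry is governed by $(B,\mathscr{P})$.

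Second, I would apply the logarithmic decomposition formula (in the style of Abramovich-Chen-Gross-Siebert) to the central fiber. The tropicalization of the moduli space of log stable maps to $X_0$ is the space of tropical curves in $(B,\mathscr{P})$, and the formula expresses the virtual count as a sum over rigid tropical types $h$, each contributing the degree of a virtual class on a moduli space $\mathcal{M}_h$ of log stable maps marked by $h$. A dimension count, using that $(X,D)$ is log Calabi-Yau and $X$ is Fano so that $B$ has a single unbounded direction, should show that only genus-0 tropical curves of the expected combinatorial type contribute, that the maximal-tangency condition corresponds to a single unbounded leg of weight $D.\beta$, and that these are exactly the elements of $\mathcal{H}_\beta(X,D)$.

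Third, I would establish a gluing formula factoring each $\deg[\mathcal{M}_h]^{\mathrm{vir}}$ into local contributions indexed by the vertices and legs of $\Gamma$. The contribution of a trivalent vertex $V$ mapping into a toric component $X_v$ is a count of rational log maps to a toric surface with prescribed tangencies; by the Nishinou-Siebert local computation this equals the lattice index $m_V=|\det(u_{(V,E_1)}|u_{(V,E_2)})|$, and higher-valent vertices are handled by deforming to trivalent ones, consistently by \cite{GPS}, Proposition 2.7. The contribution of a leg $L$ of weight $w_L$ ending in an affine singularity is a purely local log Gromov-Witten computation near a focus-focus singularity, which I expect to yield the multiple-cover factor $m_L=\frac{(-1)^{w_L+1}}{w_L^2}$. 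Collecting the vertex and leg factors together with the automorphism factor $\tfrac{1}{|\mathrm{Aut}(h)|}$ then reproduces $m_h=\mathrm{Mult}(h)$ of Definition \ref{defi:mult}.

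The hard part will be the local analysis at the affine singularities: both proving the gluing formula in the logarithmic setting, so that the global virtual class genuinely splits as a product over vertices and legs with the correct normalization of the node contributions, and carrying out the local multiple-cover calculation producing $\frac{(-1)^{w_L+1}}{w_L^2}$. This factor encodes the contribution of a curve wrapping with tangency $w_L$ and is the same phenomenon responsible for the fractional corrections in Example \ref{expl:R}; identifying it rigorously with a local log invariant, rather than reading it off from scattering, is where the main technical work lies.
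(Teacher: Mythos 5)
Your overall strategy --- degenerate via $(\mathcal{X},\mathcal{D})$, decompose the central-fiber invariant over tropical types, and factor into vertex and leg contributions --- is the same as the paper's, which runs the degeneration formula of Kim--Lho--Ruddat \cite{KLR}. But there is a genuine gap at your very first step, and it is exactly the subtlety the paper's proof turns on: the family $(\mathcal{X},\mathcal{D})\to\mathbb{A}^1$ is \emph{not} log smooth. It has log singularities at isolated points of the central fiber, one lying over each affine singularity $p_e$ of $B$. So your appeal to ``invariance of log Gromov--Witten invariants in log smooth families'' does not apply to this family, and neither the decomposition formula nor any gluing formula can be run on $X_0$ as it stands; likewise your proposed ``purely local log Gromov--Witten computation near a focus-focus singularity'' is not well-defined, because at those points the divisorial log structure is not coherent and there is no log smooth local model in which to do the computation.

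The missing idea is the resolution step: as in \cite{Gra}, one blows up $\mathcal{X}$ along irreducible components of $X_0$ to obtain a log smooth degeneration $\widetilde{\mathcal{X}}$ of $X$, whose exceptional locus is a disjoint union of projective lines, one for each log singularity resp.\ affine singularity; only on $\widetilde{\mathcal{X}}$ does the degeneration formula apply. The leg factor then has a concrete geometric origin that your sketch cannot see: a stable log map to the central fiber $\widetilde{X}_0$ may have components mapping $w_L$-fold onto an exceptional line, totally ramified at a single point, and $m_L=\frac{(-1)^{w_L+1}}{w_L^2}$ is precisely the log Gromov--Witten invariant of such multiple covers --- the same multiple-cover phenomenon as the fractional corrections in Example \ref{expl:R}. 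Legs of the tropical curve ending in affine singularities correspond to these wrapped components, geometry that only exists after the blowup. With the resolution inserted before your second step, the remainder of your outline matches the paper's argument.
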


\begin{proof}[Idea of proof]
Consider a curve (stable log map) to $(X,D)$. Its limit under the toric degeneration $(\mathcal{X},\mathcal{D})$ has several irreducible components, each mapping to a particular component of $X_0$. The tropical curves $h:\Gamma\rightarrow B$ on the tropicalization $(B,\mathscr{P})$ describe the combinatorics of these limiting stable log maps to $X_0$, and the multiplicity $\textup{Mult}(h)$ is exactly the number of lifts from a stable log map to $X_0$ to a stable log map to $X$. This is made precise by the \textit{degeneration formula} \cite{KLR}. Thus Theorem \ref{thm:trop} is basically a consequence of the degeneration formula. But there is one subtlety here. The degeneration formula only works for log smooth families, but $\mathcal{X}$ has log singularities corresponding to the affine singularities of $B$. As described in \cite{Gra}, by blowing up $\mathcal{X}$ along irreducible components of $X_0$ we obtain a log smooth degeneration $\widetilde{\mathcal{X}}$ of $X$. The exceptional locus of the blowup $\widetilde{\mathcal{X}}\rightarrow \mathcal{X}$ is a disjoint union of projective lines, one for each log singularity resp. affine singularity. A stable log map to the central fiber $\widetilde{X}_0$ of $\widetilde{\mathcal{X}}$ might have components intersecting such an exceptional line or entirely mapping onto it. This leads to the formula for $m_L$ in Definition \ref{defi:mult}, which is exactly the log Gromov-Witten invariant of $w_L$-fold covers of an exceptional line totally ramified at a single point. Figure \ref{fig:pf} shows for $(\mathbb{P}^2,E)$: an irreducible nodal cubic, its limiting map to $\widetilde{X}_0$ and $X_0$, and its corresponding tropical curve on the tropicalization $(B,\mathscr{P})$.
\end{proof}

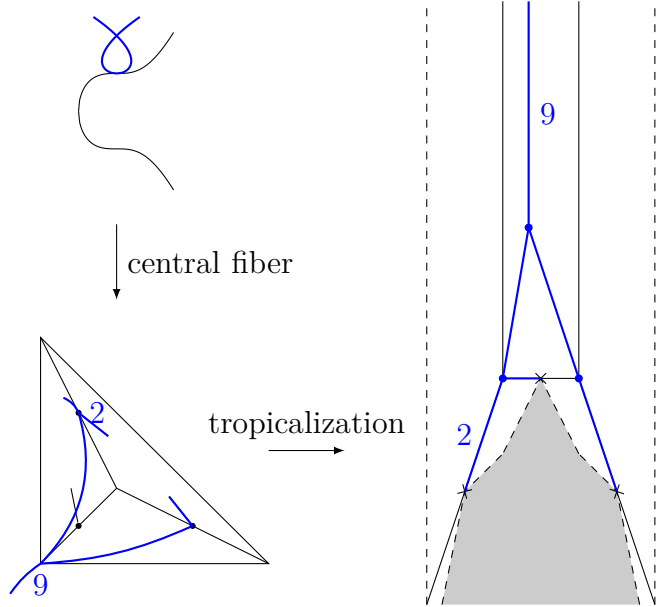
\begin{figure}[h!]
\centering
\begin{tikzpicture}[scale=1]
\draw[smooth,samples=100,domain=-0.5:0.75] plot (\x,{sqrt((2*\x)^3+1)/2+5});
\draw[smooth,samples=100,domain=-0.5:0.75] plot (\x,{-sqrt((2*\x)^3+1)/2+5});
\draw[blue,thick,smooth,samples=100,domain=-1:0.5] plot ({\x*sqrt(\x+1)/2},\x/2+6);
\draw[blue,thick,smooth,samples=100,domain=-1:0.5] plot ({-\x*sqrt(\x+1)/2},\x/2+6);
%leftrightarrow
\draw[->] (0,3.5) -- (0,2.5);
\draw (0,3) node[right]{central fiber};
% t=0 s!=0
\coordinate (0) at (0,0);
\coordinate (1) at (-1,-1);
\coordinate (2) at (2,-1);
\coordinate (3) at (-1,2);
\coordinate[fill,circle,inner sep=0.8pt] (1a) at (-0.5,-0.5);
\coordinate[fill,circle,inner sep=0.8pt] (2a) at (1,-0.5);
\coordinate[fill,circle,inner sep=0.8pt] (3a) at (-0.5,1);
\draw (1) -- (2) -- (3) -- (1);
\draw (0) -- (1);
\draw (0) -- (2);
\draw (0) -- (3);
% exc
\draw (1a) -- (-0.6,0);
\draw (2a) -- (0.69,-0.11);
\draw (3a) -- (-0.11,0.69);
% blue
\draw[thick,blue] (-1,-1) to[bend right=10] (1,-0.5);
\draw[thick,blue] (1,-0.5) -- (0.69,-0.11);
\draw[thick,blue] (-1,-1) to[bend right=30] (-0.5,1) to[bend right=20] (-0.7,1.2);
\draw[thick,blue] (-0.5,1) -- (-0.11,0.69);
\draw[thick,blue] (-1,-1) to[bend right=10] (-1.4,-1.4);
\draw[blue] (-0.5,1) node[right]{$2$};
\draw[blue] (-1,-1) node[below]{$9$};
%leftrightarrow
\draw[->] (2,0.5) -- (3,0.5);
\draw (2.5,0.5) node[above]{tropicalization};
\end{tikzpicture}
\begin{tikzpicture}[rotate=90]
\coordinate (1) at (0,0);
\coordinate (2) at (0,1);
\coordinate (3) at (-3,-1);
\coordinate (4) at (-3,2);
\draw (3) -- (1) -- (2) -- (4);
\draw (1) -- (5,0);
\draw (2) -- (5,1);
\draw[dashed] (3) -- (5,-1);
\draw[dashed] (4) -- (5,2);
% cut
\draw[dashed] (-1,0) -- (0,0.5) -- (-1,1);
\draw[dashed] (-3,-0.8) -- (-1.5,-0.5) -- (-1,0);
\draw[dashed] (-3,1.8) -- (-1.5,1.5) -- (-1,1);
\fill[opacity=0.2] (-3,1.8) -- (-1.5,1.5) -- (-1,1) -- (0,0.5) -- (-1,0) -- (-1.5,-0.5) -- (-3,-0.8);
\coordinate[fill,cross,inner sep=2pt] (5) at (0,0.5);
\coordinate[fill,cross,inner sep=2pt,rotate=26.57] (6) at (-1.5,-0.5);
\coordinate[fill,cross,inner sep=2pt,rotate=-30] (7) at (-1.5,1.5);
% blue
\fill[color=blue] (0,0) circle (1.5pt);
\fill[color=blue] (0,1) circle (1.5pt);
\fill[color=blue] (2,0.66) circle (1.5pt);
\draw[thick,blue] (-1.5,1.5) -- (-0.75,1.25) node[left]{$2$} -- (0,1) -- (2,0.66) -- (0,0) -- (-1.5,-0.5);
\draw[thick,blue] (2,0.66) -- (3.5,0.66) node[right]{$9$} -- (5,0.66);
\draw[thick,blue] (0,1) -- (0,0.5);
\end{tikzpicture}
\caption{Tropical correspondence for an irreducible cubic for $(\mathbb{P}^2,E)$.}
\label{fig:pf}
\end{figure}

\subsection{Scattering and tropical curves}
\label{S:scattrop}

Let $\mathscr{S}_k$ be a scattering diagram consistent to some order $k$ and let $\mathfrak{d}$ be a ray in $\mathscr{S}_k$ with function $f_{\mathfrak{d}}=1+at^kz^{wm}$ for $w>0$ and $m\in\mathbb{Z}^2$ some primitive vector. We construct a tropical curve $h_{\mathfrak{d}}$ as follows. Let $\Gamma$ be the graph consisting of one vertex and one leg with weight $w$, and let $h : \Gamma \rightarrow B$ map this leg to $\mathfrak{d}$. This is certainly not balanced, so is no tropical curve. We recursively add new rays to $\Gamma$ to form a tropical curve.

Let $\mathscr{S}_{b_{\mathfrak{d}}}$ be $\mathscr{S}_k$ localized at the base $b_{\mathfrak{d}}$ of $\mathfrak{d}$ as described in \S\ref{S:scat}. Let $\mathfrak{d}_1,\ldots,\mathfrak{d}_r$ be the rays of $\mathscr{S}_{b_{\mathfrak{d}}}\setminus\{\mathfrak{d}\}$ and change their functions $f_{\mathfrak{d}_i}=1+a_it^{k_i}x^{m_i}$ to $f_{\mathfrak{d}_i}=1+s_ia_it^{k_i}x^{m_i}$ for finitely many new variables $s_i$. Now scattering gives $f_{\mathfrak{d}}=1+(\prod_i s_i^{w_i})at^kx^m$ and the collection $(w_i)_{i=1,\ldots,r}$ tells us which rays are involved in the scattering to produce $\mathfrak{d}$. If $w_i\neq 0$ and $\mathfrak{d}_i$ is an initial ray, then $b_{\mathfrak{d}_i}=p_e$ for some edge $e$ of $\mathscr{P}$. In this case we add a leg to $\Gamma$ with weight $w_i$ and mapped to the half-open line segment connecting $b_{\mathfrak{d}}$ with $p_e$. If $w_i\neq 0$ and $\mathfrak{d}_i$ is not an initial ray, we add a compact ($2$-valent) edge to $\Gamma$ with weight $w_i$ and image given by the line segment connecting $b_{\mathfrak{d}}$ with $b_{\mathfrak{d}_i}$. We repeat this procedure for all non-initial rays $\mathfrak{d}_i$ we found in this way. This procedure stops after finitely many steps and we obtain a tropical curve in $\mathfrak{H}_d$. The coefficient of $\mathfrak{d}$ equals $w\text{Mult}(h_{\mathfrak{d}})$. This gives the following:

\begin{prop}[\cite{Gra}, Proposition 5.20]
\label{prop:scattrop}
We have
\[ \textup{log }f_{\textup{out}} = \sum_{d=1}^\infty\sum_{h\in\mathcal{H}_d} 3d \cdot \textup{Mult}(h) \cdot t^{3d}y^{3d}. \]
\end{prop}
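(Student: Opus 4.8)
The plan is to compute $\textup{log }f_{\textup{out}}$ directly from the product structure of $f_{\textup{out}}$ and to read off the resulting coefficients as tropical multiplicities. Recall from the construction just above the statement that every ray $\mathfrak{d}$ of $\mathscr{S}_\infty(\mathbb{P}^2)$ in the outgoing direction $m_{\mathfrak{d}}=(0,1)$ carries a monomial function $f_{\mathfrak{d}} = 1 + a_{\mathfrak{d}}t^{w_{\mathfrak{d}}}y^{w_{\mathfrak{d}}}$, where $y=z^{(0,1)}$ and I use the agreement of $t$-order and $y$-order explained after Theorem \ref{thm:main}, and that its coefficient satisfies $a_{\mathfrak{d}} = w_{\mathfrak{d}}\,\textup{Mult}(h_{\mathfrak{d}})$ for the tropical curve $h_{\mathfrak{d}}$ attached to $\mathfrak{d}$. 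Because $D.\beta = 3d$ for a degree-$d$ class on $\mathbb{P}^2$, each outgoing weight has the form $w_{\mathfrak{d}}=3d$, and $h_{\mathfrak{d}}$ then has a single unbounded leg of weight $3d$, so $h_{\mathfrak{d}}\in\mathcal{H}_d$.

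First I would expand the logarithm of $f_{\textup{out}}=\prod_{\mathfrak{d}:\,m_{\mathfrak{d}}=(0,1)}f_{\mathfrak{d}}$ using $\log(1+x)=\sum_{n\geq 1}\frac{(-1)^{n-1}}{n}x^n$, obtaining
\[ \textup{log }f_{\textup{out}} = \sum_{\mathfrak{d}}\sum_{n\geq 1}\frac{(-1)^{n-1}}{n}\,a_{\mathfrak{d}}^{\,n}\,t^{n w_{\mathfrak{d}}}y^{n w_{\mathfrak{d}}}. \]
Collecting terms of total order $3d=n w_{\mathfrak{d}}$, the contributions with $n=1$ give $\sum_{\mathfrak{d}}w_{\mathfrak{d}}\,\textup{Mult}(h_{\mathfrak{d}})\,t^{w_{\mathfrak{d}}}y^{w_{\mathfrak{d}}}$, which is exactly $\sum_d 3d\big(\sum_{h}\textup{Mult}(h)\big)t^{3d}y^{3d}$ summed over the primitive curves $h_{\mathfrak{d}}\in\mathcal{H}_d$. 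This reproduces the primitive part of the tropical vertex correspondence of \cite{GPS}. It remains to match the terms with $n\geq 2$ to the remaining, non-primitive tropical curves in $\mathcal{H}_d$.

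The heart of the argument is to show that each pair $(\mathfrak{d},n)$ with $n w_{\mathfrak{d}}=3d$ corresponds to a tropical curve $h_{(\mathfrak{d},n)}\in\mathcal{H}_d$, built from $h_{\mathfrak{d}}$ together with a bounded leg of weight $n$ ending in the affine singularity that arises from the exceptional line in the proof of Theorem \ref{thm:trop}, and to verify the numerical identity
\[ 3d\cdot\textup{Mult}\big(h_{(\mathfrak{d},n)}\big) = \frac{(-1)^{n-1}}{n}\,a_{\mathfrak{d}}^{\,n}. \]
Using $a_{\mathfrak{d}}=w_{\mathfrak{d}}\,\textup{Mult}(h_{\mathfrak{d}})$ and $3d=n w_{\mathfrak{d}}$, this is equivalent to
\[ \textup{Mult}\big(h_{(\mathfrak{d},n)}\big) = \frac{(-1)^{n-1}}{n^2}\,w_{\mathfrak{d}}^{\,n-1}\,\textup{Mult}(h_{\mathfrak{d}})^{\,n}. \]
I would verify this from Definition \ref{defi:mult}: the bounded leg of weight $n$ contributes precisely the factor $m_L=\frac{(-1)^{n+1}}{n^2}$, and the vertex multiplicities together with the automorphism normalization $1/|\textup{Aut}(h_{(\mathfrak{d},n)})|$ must account for the remaining factor $w_{\mathfrak{d}}^{\,n-1}\textup{Mult}(h_{\mathfrak{d}})^{\,n}$. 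The specific shape $\frac{(-1)^{w_L+1}}{w_L^2}$ of the multiple-cover factor in Definition \ref{defi:mult} was chosen exactly so that this matches the coefficient coming from the logarithm.

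The main obstacle is this last combinatorial matching: one has to pin down the precise graph $h_{(\mathfrak{d},n)}$ and show that its multiplicity, as prescribed by Definition \ref{defi:mult}, collapses to the clean expression produced by the logarithm. This is the only place where the non-toric geometry (the affine singularities, equivalently the exceptional lines of $\widetilde{\mathcal{X}}\to\mathcal{X}$) enters beyond the toric tropical vertex of \cite{GPS}; once it is established, summing over all $(\mathfrak{d},n)$ and regrouping by $d$ gives the claim. As a consistency check, in Example \ref{expl:scat} the three weight-$3$ rays $1+9y^3$ produce through their $n=2$ logarithmic terms the contribution $-\tfrac{243}{2}\,t^6y^6$, which combined with the weight-$6$ rays recovers $6\cdot\tfrac{135}{4}\,t^6y^6$, matching $R_2(\mathbb{P}^2,E)=\tfrac{135}{4}$.
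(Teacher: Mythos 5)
Your overall strategy---expand $\log f_{\textup{out}}$ binomial by binomial and match the $n\geq 2$ powers against the non-primitive tropical curves---is headed in the right direction, and your $n=1$ discussion and your observation that the factor $m_L=\frac{(-1)^{w_L+1}}{w_L^2}$ is designed to cancel against logarithm coefficients are both in the right spirit. But the step you yourself identify as ``the heart of the argument'' is wrong as stated, and fixing it is essentially the whole content of the proposition. The curve you call $h_{(\mathfrak{d},n)}$---``$h_{\mathfrak{d}}$ together with a bounded leg of weight $n$ ending in the affine singularity''---does not exist: there is no single distinguished singularity (the bounded legs of $h_{\mathfrak{d}}$ end at the various singularities $p_e$ whose initial rays were consumed during scattering), attaching one extra leg to a balanced curve violates the balancing condition, and in any case it would not change the out-leg weight from $w_{\mathfrak{d}}$ to $nw_{\mathfrak{d}}$, so the resulting object could not lie in $\mathcal{H}_d$ with $3d=nw_{\mathfrak{d}}$. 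The tropical curves actually pairing with the $n\geq 2$ terms are the ``$n$-fold covers'' in which \emph{all} weights of $h_{\mathfrak{d}}$ are scaled by $n$, with every bounded leg picking up its own factor $m_L$; and even this is not a clean term-by-term bijection with pairs $(\mathfrak{d},n)$.

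The deeper obstruction is that the factorization of $f_{\textup{out}}$ into binomials is not canonical: two rays $1+a_1t^ky^w$ and $1+a_2t^ky^w$ are not equivalent to the single ray $1+(a_1+a_2)t^ky^w$ beyond leading order, so a ray of $\mathscr{S}_\infty$ may aggregate several tropical curves, and $\bigl(\sum_i a_i\bigr)^n\neq\sum_i a_i^n$ produces cross-terms that your proposed identity $\textup{Mult}(h_{(\mathfrak{d},n)})=\frac{(-1)^{n-1}}{n^2}w_{\mathfrak{d}}^{\,n-1}\textup{Mult}(h_{\mathfrak{d}})^n$ cannot see. (Your order-$y^6$ consistency check only verifies the aggregate numerics, which is automatic from the log expansion; it does not test the curve-by-curve identification.) The correct bookkeeping---splitting every initial wall with its own deformation parameter so that each ray carries exactly one curve, matching the multiple-cover and cross-term contributions, and propagating this through the many scattering vertices---is precisely \cite{GPS}, Theorem 2.8, which the paper's proof invokes inductively (one application per scattering point, since the wall functions entering a later vertex are themselves outputs of earlier ones, following the $s_i$-marker construction of \S\ref{S:scattrop}). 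So your proposal is not a complete alternative proof: the ``combinatorial matching'' it defers to is misdescribed, and the true statement needed there is the cited theorem itself.
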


\begin{proof}
This follows from an inductive application of \cite{GPS}, Theorem 2.8.
\end{proof}

Proposition \ref{prop:scattrop} and Theorem \ref{thm:trop} together give Theorem \ref{thm:main}.

\section{Tutorial: scattering.sage}
\label{S:sage}

\subsection{Executing sage code}

SageMath is an open-source computer algebra system based on the python language. To execute sage code on your local system you need to install SageMath. Follow the installation instructions for your operating system as described on \href{https://www.sagemath.org/}{\texttt{https://www.sagemath.org}}. An installer for Windows can be downloaded at \href{https://github.com/sagemath/sage-windows/releases}{https://github.com/sagemath/sage-windows/releases}. Alternatively, you can execute sage code on the cloud computing service CoCalc. Go to \href{https://cocalc.com/}{\texttt{https://cocalc.com}}, create an account and create a new sage worksheet. According to my experience this is much slower, so I suggest the first option. If you opted for the first option and installed SageMath on your system, you can either use the web-based Jupyter notebook or open the SageMath Console (\texttt{SageMath x.x.exe} in Windows). In the latter you can use standard Unix and Shell commands \\
\begin{minipage}{\textwidth}
\vspace{5mm}
\hspace{1cm}
\begin{tabular}{m{6cm} l}
\texttt{cd <dir>}  		& change to directory \texttt{<dir>} \\
\texttt{cd ..}  			& go one directory back \\
\texttt{ls} 				& list files in current directory
\end{tabular}
\vspace{5mm}
\end{minipage}
If you navigated to the directory containing your sage file, you can execute it using \\
\begin{minipage}{\textwidth}
\vspace{5mm}
\hspace{1cm}
\begin{tabular}{m{6cm} l}
\texttt{load('scattering.sage')}   	& execute the file \texttt{scattering.sage}
\end{tabular}
\vspace{5mm}
\end{minipage}
Or you can load the code into the terminal and execute it then (press \texttt{Enter}) \\
\begin{minipage}{\textwidth}
\vspace{5mm}
\hspace{1cm}
\begin{tabular}{m{6cm} l}
\texttt{\%load scattering.sage}   	& load the contents of \texttt{scattering.sage} \\
\end{tabular}
\vspace{5mm}
\end{minipage}
The latter is slightly slower and it will litter your terminal with text, but it will give you better error messages for debugging.

\subsection{scattering.sage}

In the next sections I will describe how to compute scattering diagrams using the sage code \texttt{scattering.sage} that you can find on my webpage \href{https://timgraefnitz.com/}{https://timgraefnitz.com/}. If you download \texttt{scattering.sage} and execute it you won't see anything, but now all the methods implemented in \texttt{scattering.sage} are available for execution and you can follow the tutorial presented in the next sections. 

If you want to see quick success you can open \texttt{scattering.sage} with a simple text editor, scroll down to the very bottom of the code, uncomment one of the last two paragraphs, and execute the code again. The first paragraph will compute and display the scattering diagram for $(\mathbb{P}^2,E)$ up to degree $2$. The second will compute the standard scattering diagram with initial functions $(1+tx)^3$ and $(1+ty)^3$ as in \cite{GPS}, Example 1.6. Moreover, you can type \texttt{scattering()} for interactive usage. 

I am still improving the code continuously, so some functions might not behave exactly as described here.

\subsection{Initializing scattering diagrams}

Before you can initialize a scattering diagram, you need to initialize the ring $\mathbb{Q}[t_0,\ldots,t_{r-1}][x^{\pm 1},y^{\pm 1}]$ with enough $t$-variables suitable for your computation. \\
\begin{minipage}{\textwidth}
\vspace{5mm}
\hspace{1cm}
\begin{tabular}{m{6cm} l}
\texttt{initialize(r)}	 & initializes $\mathbb{Q}[t_0,\ldots,t_{r-1}][x^{\pm 1},y^{\pm 1}]$
\end{tabular}
\vspace{2mm}
\end{minipage}
Then you can create a scattering diagram by specifying it rays, for instance: \\
\begin{minipage}{\textwidth}
\vspace{5mm}
\hspace{1cm}
\begin{tabular}{m{6cm} l}
\texttt{D=Diagram([((0,0),(1,0),1+t{\textunderscore}0*x),((0,0),(0,1),1+t{\textunderscore}1*y)])} &
\end{tabular}
\vspace{5mm}
\end{minipage}
There are some predefined scattering diagrams that you can create easily:
\begin{minipage}{\textwidth}
\vspace{5mm}
\hspace{1cm}
\begin{tabular}{m{6cm} l l}
\texttt{D=Diagram('std',(m,n))} 	& $f_1=(1+t_1x)^m$ 	& $f_2=(1+t_2y)^n$ \\
\texttt{D=Diagram('exp',(m,n))} 	& $f_1=1+t_1x^m$ 	& $f_2=1+t_2y^n$ \\
\texttt{D=Diagram('det',m)} 		& $f_1=1+t_1x$ 		& $f_2=1+t_2x^{-1}y^m$
\end{tabular}
\vspace{5mm}
\end{minipage}

More important for us, you can create the scattering diagram of a very ample log Calabi-Yau pair $(X,D)$ by giving $X$ or the case of Figure \ref{fig:16} and the order $k$ you wish to compute. The order gives the number of fundamental domains (see \S\ref{S:unfold} and \S\ref{S:unfold2}) that are generated. This is necessary to compute the numbers $R_\beta(X,D)$ for $\textup{ord}(\beta)\leq k$ , where $\textup{ord}(\beta)=\textup{max}\{k \ |\ \exists \beta': \beta=k\beta'\}$.
\begin{minipage}{\textwidth}
\vspace{5mm}
\hspace{1cm}
\begin{tabular}{m{6cm} l}
\texttt{D=Diagram('P2',k)} 		& \texttt{D=Diagram('(9)',k)} \\
\texttt{D=Diagram('P1xP1',k)} 	& \texttt{D=Diagram('(8')',k)} \\
\texttt{D=Diagram('cubic',k)}	& \texttt{D=Diagram('(3a)',k)}
\end{tabular}
\vspace{5mm}
\end{minipage}
For spacing reasons the diagrams for $(X,D)$ are implemented with unbounded $x$-direction as in Figure \ref{fig:scatP2}, not unbounded $y$-direction as in the rest of this paper.

\subsection{Scattering computations}

Now we created a diagram, we can start scattering. To create from a diagram $\texttt{D}$ a diagram $\texttt{D1}$ consistent to $t$-order $k$ you write \\
\begin{minipage}{\textwidth}
\vspace{5mm}
\hspace{1cm}
\begin{tabular}{m{6cm} l}
\texttt{D1=D.scattering(k)} & gives the diagram of \texttt{D}  to order \texttt{k} \\
\end{tabular}
\vspace{5mm}
\end{minipage}
For diagrams of pairs $(X,D)$ this will be very slow. Already for $(\mathbb{P}^2,E)$ and degree $3$ it takes more than a day. But we can speed up the calculation by using the symmetry of the scattering diagram. To do so, give $X$ or the its label (as in Figure \ref{fig:16}) as a second argument to \texttt{scattering()}. For instance, \\
\begin{minipage}{\textwidth}
\vspace{5mm}
\hspace{1cm}
\begin{tabular}{m{6cm} l}
\texttt{D=Diagram('P2',3)} & \\
\texttt{D1=D.scattering(3,'P2')} &
\end{tabular}
\vspace{5mm}
\end{minipage}
will give the scattering diagram for $(\mathbb{P}^2,E)$ and degree $d\leq 3$ in less than a minute.

\subsection{Printing diagrams}
You have the following functions to display a diagram: \\
\begin{minipage}{\textwidth}
\vspace{5mm}
\hspace{1cm}
\begin{tabular}{m{6cm} l}
\texttt{print(D)} 	&	print \texttt{D} to the terminal \\
\texttt{D.draw()} 	&	display a \texttt{.png} file of \texttt{D} \\
\texttt{D.tex()} 		&	print tikz code for \texttt{D}
\end{tabular}
\vspace{5mm}
\end{minipage}
The functions \texttt{draw()} and \texttt{tex()} have the following optional arguments \\
\begin{minipage}{\textwidth}
\vspace{5mm}
\hspace{1cm}
\begin{tabular}{m{6cm} l}
\texttt{special:list(Ray)}   			& list of special rays that are printed fat \\
\texttt{colors:boolean} 			& color rays depending on their order \\
\texttt{functions:boolean} 			& show functions of rays \\
\texttt{directions:list(tuple)}	   	& color and print functions only for rays \\
								& with these directions
\end{tabular}
\end{minipage}
For instance, \\
\begin{minipage}{\textwidth}
\vspace{5mm}
\hspace{1cm}
\begin{tabular}{m{6cm} l}
\texttt{D=Diagram('P2',6).scattering(6,'P2')} & \\
\texttt{D.tex(colors=True,directions=[(1,0)])} &
\end{tabular}
\vspace{5mm}
\end{minipage}
gives the tikz code to produce Figure \ref{fig:scatP2}, up to some scaling. This will take about two hours on a fast computer. From \texttt{print(D1)} you can read off the function $f_{\textup{out}}$ used in Example \ref{expl:scat} to compute $R_1(\mathbb{P}^2,E)=9$, $R_2(\mathbb{P}^2,E)=\frac{135}{4}$ and $R_3(\mathbb{P}^2,E)=244$.

\subsection{Saving and loading diagrams}

You can save a diagram by \\
\begin{minipage}{\textwidth}
\vspace{5mm}
\hspace{1cm}
\begin{tabular}{m{6cm} l}
\texttt{D.save(<file path>)} 	& saves a ray list of \texttt{D} in \texttt{<file path>} \\
\texttt{D.save()}				& saves in \texttt{'calculations.txt'}
\end{tabular}
\vspace{5mm}
\end{minipage}
When you initialize a diagram of a pair $(X,D)$ the code will look for saved diagrams in \texttt{calculations.txt}. You can change the path where it looks by giving it as an argument \texttt{path} to \texttt{Diagram()}. If you call \texttt{scattering()} with an order that has already been calculated, the code will tell you so and not compute it again.

\subsection{Tropical curves and curve classes}

If you \texttt{print(D)} for \texttt{D} a scattering diagram of some pair $(X,D)$, this will also display the classes $\beta_{\mathfrak{d}}$ of $\mathfrak{d}$ as described in \S\ref{S:classes}. These are computed using the tropical curves $h_{\mathfrak{d}}$ obtained by completing $\mathfrak{d}$, see \S\ref{S:scattrop}. You can also display $h_{\mathfrak{d}}$. To access a particular ray in a diagram \texttt{D} you can type \texttt{ray=D.rays[i]} where \texttt{i} is some index that shouldn't be larger than the number of rays in \texttt{D}. Then you can print \texttt{ray.tropical{\textunderscore}curve}. For instance,

\begin{minipage}{\textwidth}
\vspace{5mm}
\hspace{1cm}
\begin{tabular}{m{6cm} l}
\texttt{D=Diagram('P2',3).scattering(3,'P2')} & \\
\texttt{ray=D.rays[79]} & \\
\texttt{D.draw(ray.tropical{\textunderscore}curve)} &
\end{tabular}
\vspace{5mm}
\end{minipage}

\subsection{Broken lines}

The superpotential of a Landau-Ginzburg model that is mirror dual to $(X,D)$ can be computed using broken lines on the tropicalization $(B,\mathscr{P})$, see \cite{CPS}. By \cite{Gra2} broken lines correspond to $2$-marked log Gromov-Witten invariants $R_{p,q}(X,D)$ in the same way that tropical curves correspond to $1$-marked log Gromov-Witten invariants. In \cite{GRZ} broken lines were related to the open mirror map.

I will not define broken lines here, but for those who are familiar with this notion it might be interesting to note that you can compute broken lines of a diagram \texttt{D} by \texttt{D.brokenlines(pt,k)} where \texttt{pt} is some point in $\mathbb{R}^2$ and $k$ is some order. Again you can speed up the calculation by giving a case as additional parameter. Moreover, you can specify the incoming and ending exponents of your broken lines. \\
\begin{minipage}{\textwidth}
\vspace{5mm}
\hspace{5mm}
\begin{tabular}{m{6cm} l}
\texttt{D=Diagram('P2',2).scattering(2,'P2')} & \\
\texttt{D1=D.brokenlines((9,0.13),2,'P2',exp{\textunderscore}end=[(-1,0)],exp{\textunderscore}in=[(5,0)])} &
\end{tabular}
\vspace{5mm}
\end{minipage}
gives the broken lines to compute the log Gromov-Witten invariant $R_{1,5}(\mathbb{P}^2,E)=25$ of conics meeting $E$ in a fixed point with multiplicity $1$ and another point with multiplicity $5$ as in \cite{GRZ}, Appendix B.

%%%% Literature

%\newgeometry{left=2cm, right=2cm, top=2.5cm, bottom=2cm}


\begin{thebibliography}{9}
\bibitem[BBG]{BBG} P. Bousseau, A. Brini, M. van Garrel, \textit{Stable maps to Looijenga pairs}, \href{https://arxiv.org/abs/2011.08830}{arXiv:2011.08830}, 2020.
\bibitem[CPS]{CPS} M. Carl, M. Pumperla, B. Siebert, \textit{A tropical view on Landau-Ginzburg models}, in preparation.
\bibitem[Gra]{Gra} T. Graefnitz, \textit{Tropical correspondence for smooth del Pezzo log Calabi-Yau pairs}, J. Alg. Geom. 31 (2022) 4, 687--749, \href{https://arxiv.org/abs/2005.14018}{arXiv:2005.14018}, 2020.
\bibitem[Gra2]{Gra2} T. Graefnitz, \textit{Theta functions, broken lines and $2$-marked log Gromov-Witten invariants}, \href{https://arxiv.org/abs/2204.12257}{arXiv:2204.12257}, 2022.
\bibitem[GRZ]{GRZ} T. Graefnitz, H. Ruddat, E. Zaslow, \textit{The proper Landau-Ginzburg potential is the open mirror map}, \href{https://arxiv.org/abs/2204.12249}{arXiv:2204.12249}, 2022.
\bibitem[GPS]{GPS} M. Gross, R. Pandharipande, B. Siebert, \textit{The tropical vertex}, Duke Math. J. 153 (2), 297--362, 2010.
\bibitem[GS1]{GSreconstruction} M. Gross, B. Siebert, \textit{From real affine geometry to complex geometry}, Ann. of Math. 174 (3), 1301--1428, 2011.
\bibitem[GS2]{GSloggw} M. Gross, B. Siebert, \textit{Logarithmic Gromov-Witten invariants}, J. Amer. Math. Soc. 26, 451--510, 2013.
\bibitem[KLR]{KLR} B. Kim, H. Lho, H. Ruddat, \textit{The degeneration formula for stable log maps}, \href{https://arxiv.org/abs/1803.04210}{arXiv:1803.04210}, 2018.
\bibitem[KS]{KS} M. Kontsevich, Y. Soibelman, \textit{Affine structures and non-Archimedean analytic spaces}. Progr. Math. 244, 321--385, 2006
\bibitem[Rud]{Rud} H. Ruddat, \textit{A homology theory for tropical cycles on integral affine manifolds and a perfect pairing}, Geometry \& Topology 25, 3079--3132, 2021.
\end{thebibliography}
\end{document}